\newcommand{\beq}{\begin{equation}}
\newcommand{\eeq}{\end{equation}}
\newcommand{\bea}{\begin{eqnarray}}
\newcommand{\eea}{\end{eqnarray}}
\newcommand{\beas}{\begin{eqnarray*}}
\newcommand{\para}{\mathbin{\!/\mkern-5mu/\!}}
\newcommand{\eeas}{\end{eqnarray*}}
\newtheorem{theorem}{Theorem}[section]
\newtheorem{definition}[theorem]{Definition}
\newtheorem{proposition}[theorem]{Proposition}
\newtheorem{corollary}[theorem]{Corollary}
\newtheorem{lemma}[theorem]{Lemma}
\newtheorem{remark}[theorem]{Remark}
\newtheorem{example}[theorem]{Example}
\newtheorem{examples}[theorem]{Examples}
\newtheorem{foo}[theorem]{Remarks}
\newenvironment{proof}{\addvspace{\medskipamount}\par\noindent{\it
Proof}.}
{\unskip\nobreak\hfill$\Box$\par\addvspace{\medskipamount}}
\newcommand{\bM}{\mathbb M}
\newcommand{\M}{\mathbb M}
\newcommand{\ve}{\varepsilon}
\begin{document}
\title{Log-Sobolev inequalities on the horizontal path space of a totally geodesic foliation}
\author{Fabrice Baudoin, Qi Feng}
\date{Department of Mathematics, Purdue University \\
West Lafayette, IN, USA}
\maketitle

\begin{abstract}
We develop a Malliavin calculus on the horizontal path space of a totally geodesic Riemannian foliation. As a first application, under suitable assumptions, we prove a log-Sobolev inequality for a natural one-parameter  family of infinite-dimensional Ornstein-Uhlenbeck type operators. As a second application, we obtain concentration and tail estimates for the horizontal Brownian motion of the foliation.
\end{abstract}

\baselineskip 0.25in

\tableofcontents

%\newpage

\section{Introduction}

Log-Sobolev inequalities have extensively been studied in connection with convergence to equilibrium for parabolic partial differential equations and hypercontractivity properties of the corresponding semigroup. In \cite{BB}, log-Sobolev inequalities for subelliptic diffusion operators were studied by using purely analytic tools and then recovered in \cite{FBb} by using stochastic analysis. In the present work, we show that those log-Sobolev inequalities actually are  finite-dimensional projections of a family of  log-Sobolev inequalities that hold on the path space lying above the diffusion process associated to the subelliptic diffusion operator. To prove these inequalities, we develop a Malliavin calculus on the horizontal path space of a foliation. This calculus is interesting in itself, and besides log-Sobolev inequalities, it will also allow us to prove concentration properties and tail estimates. We will also single out an interesting one-parameter family of interesting Ornstein-Uhlenbeck type operators on the horizontal path space. Our log-Sobolev inequalities are then equivalent to hypercontractivity properties of the heat  semigroups generated by those operators.

\

We now enter into more details of our contribution. In Section 2, we present the prerequisites for the reading of the paper. Most of the material here is taken from the course \cite{FBa} to which we refer for further details. Our framework consists of a totally geodesic Riemannian foliation and we are interested in the horizontal Laplacian of the foliation. From our assumptions, this horizontal Laplacian is a hypoelliptic operator that (locally) satisfies the H\"ormander's bracket generating condition. As was proved in \cite{FBd}, one can prove Weitzenb\"ock type identities for this horizontal Laplacian. A fundamental difference with the Riemannian case, where Weitzenb\"ock identity involves one and only one operator on one-forums, the Hodge-de Rham operator, is that in this foliated case, there is a one-parameter family of canonical operators on one-forms. These Weitzenb\"ock identities admit several corollaries. They imply, in particular an integration by parts formula, for the horizontal Brownian motion, that is the diffusion generated by the horizontal Laplacian.

\

In Section 3, we enter into the heart of our analysis. We first introduce the relevant Malliavin derivatives on the horizontal path space and prove that the integration by parts formula for the horizontal Brownian motion is actually the projection of an integration by parts formula that holds on the path space of the diffusion. This integration by parts formula goes hand in hand with a Clark-Ocone type representation for functionals of the horizontal Brownian motion.

\

In Section 4, we adapt a method of E. P. Hsu \cite{Hsu}, to prove in our framework and from our Clark-Ocone representation, a log-Sobolev inequality on the path space of the horizontal Brownian motion.

\

In Section 5, as an application of our log-Sobolev inequalities, by adapting a method of M. Ledoux,  we provide concentration and tail estimates for the horizontal Brownian motion.

\section{Preliminaries}

Let $\mathbb{M}$  be a smooth and connected $n+m$ dimensional manifold. We assume that $\mathbb{M}$ is endowed with a Riemannian foliation with a complete bundle-like metric $g$ and totally geodesic $m-$dimensional leaves. We refer to the book  of Tondeur \cite{PT} for a detailed account about foliations. Since our framework is similar to the one in \cite{FBa, FBd}, we also refer to these references for further details about the setting and notations.

\

The sub-bundle $\mathcal{V}$ defined  from vectors tangent to the leaves is the set of $vertical ~directions$. The sub-bundle $\mathcal{H}$ which is normal to $\mathcal{V}$ is  the set of $horizontal~directions$. We will assume that  $\mathcal{H}$ is bracket generating. 

 The metric $g$ can be split as 
\[
g=g_{\mathcal{H}}\oplus g_{\mathcal{V}}.
\]
and the canonical variation of $g$ is defined as the one-parameter family of Riemannian metrics:
\[
g_{\varepsilon}=g_{\mathcal{H}}\otimes\frac{1}{\varepsilon}g_{\mathcal{V}},~~\varepsilon>0.
\]

The sub-Riemannian limit is $\varepsilon \to 0$. The Bott connection on $\M$ is defined as follows:
\[
\nabla_XY=\begin{cases} \pi_{\mathcal{H}}(\nabla_X^RY),X,Y\in \Gamma^{\infty}(\mathcal{H})\\ \pi_{\mathcal{H}}([X,Y]),X\in \Gamma^{\infty}(\mathcal{V}),Y\in \Gamma^{\infty}(\mathcal{H})\\\pi_{\mathcal{V}}([X,Y]),X\in \Gamma^{\infty}(\mathcal{H}),Y\in \Gamma^{\infty}(\mathcal{V})\\ \pi_{\mathcal{V}}(\nabla_X^RY),X,Y\in \Gamma^{\infty}(\mathcal{V})\end{cases}
\]
where $\nabla^R$ is the Levi-Civita connection of $g$ and $\pi_{\mathcal{H}}$ (resp. $\pi_{\mathcal{V}}$) the projection on $\mathcal{H}$ (resp. $\mathcal{V}$). It is easy to check that for every $\varepsilon>0$, this connection satisfies $\nabla g_{\varepsilon}=0.$

For $Z\in \Gamma^{\infty}(T\mathbb{M})$, there is a unique skew-symmetric endomorphism $J_Z:\mathcal{H}_x\rightarrow \mathcal{H}_x$ such that for all horizontal vector fields $X$ and $Y$, 
\begin{equation}
g_{\mathcal{H}}(J_Z(X),Y)=g_{\mathcal{V}}(Z,T(X,Y)).
\end{equation}
where $T$ is the torsion tensor of $\nabla$. We then extend $J_Z$ to be $0$ on $\mathcal{V}_x$. If $Z_1,\cdots,Z_m$ is a local vertical frame, the operator $\sum_{l=1}^mJ_{Z_{l}}J_{Z_{l}}$ does not depend on the choice of the frame and shall concisely be denoted by $\mathbf{J}^2$. 
The horizontal divergence of the torsion $T$ is the $(1,1)$ tensor which is defined in a local horizontal frame $X_1,\cdots,X_n$ by
\[
\delta_{\mathcal{H}}T(X)=\sum_{j=1}^n(\nabla_{X_j}T)(X_j,X).
\]

\begin{definition}
The foliation is said to be of Yang-Mills type if $\delta_{\mathcal{H}}T=0$.
\end{definition}

In this paper we always assume that the foliation is of Yang-Mills type.

We define  the horizontal Ricci curvature $\mathfrak{Ric}_{\mathcal{H}}$ as the fiberwise linear map from the space of one-form into itself which is such that for every $f,g \in C^\infty(\M)$,
\[
\langle \mathfrak{Ric}_{\mathcal{H}}(df),dg\rangle=\mathbf{Ricci}(\nabla_{\mathcal{H}}f,\nabla_{\mathcal{H}}g),
\]
where $\mathbf{Ricci}$ is the Ricci curvature of the Bott connection $\nabla$.

If $V$ is a horizontal vector field and $\varepsilon>0$, we consider the fiberwise linear map from the space of one-forms into itself which is given by $\eta\in \Gamma^{\infty}(T^*\mathbb{M})$ and $Y\in \Gamma^{\infty}(T\mathbb{M})$ by
\[
\mathfrak{T}_V^{\varepsilon}\eta(Y)=\begin{cases}\frac{1}{\varepsilon}(J_YV),~Y\in \Gamma^{\infty}(\mathcal{V})\\-\eta(T(V,Y)),~Y\in \Gamma^{\infty}(\mathcal{H})
\end{cases}\]
 We observe that $\mathfrak{T}_V^{\varepsilon}$ is skew-symmetric for the metric $g_{\varepsilon}$ so that $\nabla-\mathfrak{T}^{\varepsilon}$ is a $g_{\varepsilon}$-metric connection.

 Finally, we denote by $L$ the  horizontal Laplacian acting on functions or one-forms. In a local horizontal frame $X_i$, we have 
\[
L=\sum_{i=1}^n\nabla_{X_i}\nabla_{X_i}-\nabla_{\nabla_{X_i}X_i}.
\]

We now introduce a family of operators that was first considered in \cite{FBd}. For $\varepsilon>0$, we consider the following operator which is defined on one-forms by
\[
\square_{\varepsilon}=-(\nabla_{\mathcal{H}}-\mathfrak{T}_{\mathcal{H}}^{\varepsilon})^*(\nabla_{\mathcal{H}}-\mathfrak{T}^{\varepsilon}_{\mathcal{H}})-\frac{1}{\varepsilon}\mathbf{J}^2-\mathfrak{Ric}_{\mathcal{H}}.
\]
The following  result was proved in   \cite{FBd}.

\begin{proposition}[Theorem 3.1 in \cite{FBd}]\label{commu1}
For every $f\in C^{\infty}(\mathbb{M})$ and $\varepsilon >0$, we have
\[
dLf=\square_{\varepsilon}df
\]
\end{proposition}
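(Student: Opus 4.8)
The plan is to prove this as a Weitzenb\"ock-type commutation identity by expanding both sides in a local frame adapted to the foliation and matching the resulting terms. I would fix a point $x\in\mathbb{M}$ and choose a local horizontal orthonormal frame $X_1,\dots,X_n$ together with a local vertical frame $Z_1,\dots,Z_m$ that is normal at $x$ for the Bott connection, so that $\nabla_{X_i}X_j=0$ and $\nabla_{X_i}Z_l=0$ at $x$; this is legitimate since $\nabla$ is a $g_\varepsilon$-metric connection. Working at such a point removes all first-order Christoffel contributions and reduces the claim to an identity among second-order and curvature terms, which suffices by the tensoriality of both sides.

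First I would compute the left-hand side. Writing $Lf=\sum_i\big(X_iX_if-(\nabla_{X_i}X_i)f\big)$ and recalling that for a function $\nabla(df)=\operatorname{Hess}f$ with $\operatorname{Hess}f(X,Y)=XYf-(\nabla_XY)f$, I would express $Lf$ as the horizontal trace $\sum_i(\nabla_{X_i}df)(X_i)$ and then evaluate $d(Lf)(Y)=Y(Lf)$. Pushing the derivative inside the trace produces the third-order expression $\sum_i\nabla^2_{Y,X_i,X_i}df$ at $x$. On the right-hand side, the leading term of $\square_\varepsilon$ is the connection Laplacian $-(\nabla_{\mathcal{H}}-\mathfrak{T}^\varepsilon_{\mathcal{H}})^*(\nabla_{\mathcal{H}}-\mathfrak{T}^\varepsilon_{\mathcal{H}})df$, which at the normal point expands as $\sum_i(\nabla-\mathfrak{T}^\varepsilon)_{X_i}(\nabla-\mathfrak{T}^\varepsilon)_{X_i}df$, together with the two remaining summands $-\frac{1}{\varepsilon}\mathbf{J}^2 df-\mathfrak{Ric}_{\mathcal{H}}df$. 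Expanding the product of the modified connection isolates the pure second-derivative piece $\sum_i\nabla_{X_i}\nabla_{X_i}df$, a first-order cross term linear in $\mathfrak{T}^\varepsilon$, and a zeroth-order term quadratic in $\mathfrak{T}^\varepsilon$.

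The comparison then rests on the Ricci identity for the Bott connection: commuting the order of differentiation in $\sum_i\nabla^2_{Y,X_i,X_i}df-\sum_i\nabla^2_{X_i,X_i,Y}df$ introduces the curvature operator $R(Y,X_i)$ and, because $\nabla$ has torsion, an additional term $\nabla_{T(Y,X_i)}df$. Tracing the curvature contribution over $i$ yields precisely $\mathfrak{Ric}_{\mathcal{H}}(df)$, matching the $-\mathfrak{Ric}_{\mathcal{H}}$ summand. The torsion-derivative terms split into a horizontal part, which assembles into the horizontal divergence $\sum_i(\nabla_{X_i}T)(X_i,\cdot)$ and vanishes by the \emph{Yang-Mills assumption} $\delta_{\mathcal{H}}T=0$, and a vertical part, which recombines with the linear $\mathfrak{T}^\varepsilon$ cross term and, via $g_{\mathcal{H}}(J_ZX,Y)=g_{\mathcal{V}}(Z,T(X,Y))$, with the quadratic piece to reproduce exactly $-\frac{1}{\varepsilon}\mathbf{J}^2 df$.

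The hard part will be the bookkeeping of the vertical contributions and the $\varepsilon$-scaling: since $df$ is a full one-form, its vertical component is differentiated by $\nabla$, and the torsion couples horizontal and vertical directions through $J_Z$. I expect the cleanest organization is to use that $\nabla-\mathfrak{T}^\varepsilon$ is a $g_\varepsilon$-metric connection and that $\mathfrak{T}^\varepsilon_V$ is $g_\varepsilon$-skew-symmetric, so that the adjoint $(\nabla_{\mathcal{H}}-\mathfrak{T}^\varepsilon_{\mathcal{H}})^*$ is taken with respect to $g_\varepsilon$ and the cross terms regroup consistently. The Yang-Mills condition is the decisive structural input that removes the otherwise obstructing divergence-of-torsion term, and confirming that the surviving vertical terms collapse exactly onto $\frac{1}{\varepsilon}\mathbf{J}^2 df$, with the correct power of $\varepsilon$ coming from the vertical rescaling in $g_\varepsilon$ and in $\mathfrak{T}^\varepsilon_V$, is the step that must be tracked most carefully.
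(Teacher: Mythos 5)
You should first be aware that the paper does \emph{not} prove this proposition: the bracketed attribution ``Theorem 3.1 in \cite{FBd}'' is the entire argument, so the only meaningful comparison is with the proof in that reference. Your overall strategy --- compute in an adapted frame that is $\nabla$-parallel at a point (legitimate, since the Bott connection preserves the splitting $\mathcal{H}\oplus\mathcal{V}$ and the metrics $g_\varepsilon$), apply the Ricci identity for a connection with torsion, trace horizontally, and invoke the Yang--Mills condition --- is indeed the strategy of the Weitzenb\"ock computation carried out there, so at the level of the plan you are reconstructing the right proof rather than inventing a different one.

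However, the sketch stops exactly where the content of the proposition lies, and several of its asserted matchings are not where the cancellations actually occur. First, the identity $dLf=\square_\varepsilon df$ is an identity between \emph{one-forms}, and your term-matching only discusses evaluation against horizontal vectors $Y$; the verification on vertical vectors $Z$ is a separate and in fact harder half of the computation, in which the vertical rule $\mathfrak{T}^\varepsilon_V\eta(Z)=\frac{1}{\varepsilon}\eta(J_ZV)$ and the Yang--Mills condition (through $\delta_{\mathcal{H}}T=0$ controlling traces of $\nabla J$) do the real work; this half is absent from your plan. Second, on horizontal arguments your bookkeeping is backwards in one respect: every term produced by commuting derivatives of $dLf$ is $\varepsilon$-independent (the Bott connection, its torsion and its curvature do not see $\varepsilon$), so the $\frac{1}{\varepsilon}$-scaled terms on the right-hand side must cancel \emph{among themselves}; concretely, the part of the connection Laplacian quadratic in $\mathfrak{T}^\varepsilon$ equals $+\frac{1}{\varepsilon}df(\mathbf{J}^2\,\cdot)$ (via $\sum_i J_{T(X_i,Y)}X_i=-\mathbf{J}^2Y$, which follows from the defining identity for $J$) and is killed by the explicit $-\frac{1}{\varepsilon}\mathbf{J}^2$ summand of $\square_\varepsilon$, rather than the $\mathbf{J}^2$ term being ``reproduced'' by commutation torsion terms. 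Third, the commutation produces vertical derivatives $\sum_i(\nabla_{T(Y,X_i)}df)(X_i)$, which no term of $\square_\varepsilon$ can match directly since $\square_\varepsilon$ only differentiates horizontally; eliminating them requires a structural fact you never invoke, namely that the torsion of the Bott connection vanishes when one argument is vertical, so that the Hessian $\nabla df$ is symmetric in mixed horizontal--vertical slots. Finally, identifying the traced curvature $\sum_i df(R(Y,X_i)X_i)$ with $\mathfrak{Ric}_{\mathcal{H}}df$ is not automatic: $\mathfrak{Ric}_{\mathcal{H}}$ is defined through the symmetric pairing $\mathbf{Ricci}(\nabla_{\mathcal{H}}f,\nabla_{\mathcal{H}}g)$, whereas the Ricci tensor of a metric connection with torsion need not be symmetric; its horizontal antisymmetric part is, by the traced first Bianchi identity (the quadratic torsion terms drop because $T(\mathcal{V},\cdot)=0$), precisely a $\delta_{\mathcal{H}}T$-term. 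So Yang--Mills is needed a second time, to make this identification (and indeed the self-adjointness of $\mathfrak{Ric}_{\mathcal{H}}$ used later in the paper) valid, not merely to discard the explicit divergence-of-torsion term. None of this derails the approach --- filling in these points is exactly what the proof in \cite{FBd} consists of --- but as written your proposal asserts, rather than performs, the computations in which all the difficulty resides.
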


The completeness of the metric $g$ implies that the horizontal Laplacian $L$ is essentially self-adjoint on the space of smooth and compactly supported functions. As such it generates a 
sub-Markov semigroup $P_t=e^{\frac{1}{2} tL}$.

\

 From now on and throughout the paper, we will assume that for every horizontal one-form $\eta\in \Gamma^{\infty}(\mathcal{H}^*)$,
\begin{equation}\label{bounds}
\langle \mathfrak{Ric}_{\mathcal{H}}\eta,\eta\rangle_{\mathcal{H}}\geq -K\|\eta\|^2_{\mathcal{H}}, ~~~-\langle\mathbf{J}^2(\eta),\eta\rangle_{\mathcal{H}}\leq \kappa \|\eta\|_{\mathcal{H}}^2,
\end{equation}
with $K \ge 0,\kappa >0$.

By using the argument in Lemma 4.3 of \cite{FBb}) we see that the operator $\square_{\varepsilon}$ is essentially self-adjoint on the space of smooth and compactly supported one-forms.  We denote $Q_t^\varepsilon$ the semigroup generated by $\frac{1}{2}\square_{\varepsilon}$ .  

The following result was then proved in \cite{FBd}.

\begin{proposition}[Lemma 4.1 in \cite{FBd}]
Let $\varepsilon >0$. If $f\in C_0^{\infty}(\mathbb{M})$, then for every $t\geq 0$,
\[
dP_tf=Q_t^{\varepsilon}df.
\]
\end{proposition}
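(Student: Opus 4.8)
The plan is to prove the identity by an interpolation argument between the two semigroups, using Proposition \ref{commu1} as the infinitesimal (generator-level) version of the desired relation. Fix $t>0$ and $f\in C_0^{\infty}(\mathbb{M})$, and consider the one-form valued curve
\[
s\longmapsto u(s):=Q_{t-s}^{\varepsilon}\,dP_sf,\qquad s\in[0,t].
\]
At the endpoints this curve takes the values $u(0)=Q_t^{\varepsilon}df$ and $u(t)=dP_tf$, so it suffices to show that $u$ is constant on $[0,t]$.

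To establish constancy I would differentiate in $s$ and use the product rule for the two strongly continuous semigroups. Since $P_t=e^{\frac12 tL}$ and $Q_t^{\varepsilon}=e^{\frac12 t\square_{\varepsilon}}$, one formally obtains
\[
\frac{d}{ds}u(s)=-\tfrac12\,\square_{\varepsilon}Q_{t-s}^{\varepsilon}dP_sf+Q_{t-s}^{\varepsilon}\,d\!\left(\tfrac12 LP_sf\right).
\]
Now $P_sf$ is smooth by hypoellipticity of $L$, so Proposition \ref{commu1} applies to it and gives $dLP_sf=\square_{\varepsilon}dP_sf$; hence the second term equals $\tfrac12 Q_{t-s}^{\varepsilon}\square_{\varepsilon}dP_sf$. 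Because $\square_{\varepsilon}$ commutes with its own semigroup $Q_{t-s}^{\varepsilon}$ on its domain, the two terms cancel and $\frac{d}{ds}u(s)=0$. Evaluating $u(0)=u(t)$ then yields the claim.

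The main obstacle is purely analytic: making the differentiation above and the interchange of $d$ with the generators rigorous, since $L$ and $\square_{\varepsilon}$ are unbounded. Here I would rely on the essential self-adjointness of $L$ (from completeness of $g$) and of $\square_{\varepsilon}$ (from the curvature bounds \eqref{bounds}, via the argument of Lemma 4.3 in \cite{FBb}) in order to invoke the self-adjoint functional calculus for both semigroups. Concretely, one must verify that for $f\in C_0^{\infty}(\mathbb{M})$ the orbit $s\mapsto P_sf$ is differentiable into a suitable domain and that $dP_sf$ lies in the domain of $\square_{\varepsilon}$, so that $s\mapsto u(s)$ is genuinely $C^1$ with the stated derivative; the lower bound on $\square_{\varepsilon}$ furnished by \eqref{bounds} is precisely what controls $Q_t^{\varepsilon}$ on $L^2$ one-forms and supplies the a priori $L^2$-estimates on $dP_sf$ needed for the endpoint limits $s\to 0$ and $s\to t$. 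Once $u$ is shown to have vanishing derivative, constancy gives the identity first in $L^2$ and then, by hypoellipticity and smoothness, as an identity of smooth one-forms.
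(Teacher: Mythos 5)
Note first that the paper itself gives no proof of this statement: it is quoted directly from \cite{FBd} (Lemma 4.1 there), so there is no internal argument to compare against. Your interpolation $s \mapsto Q_{t-s}^{\varepsilon}\, dP_s f$, driven by the Weitzenb\"ock commutation $dLg=\square_{\varepsilon}dg$ of Proposition \ref{commu1} applied to the smooth function $P_sf$, together with essential self-adjointness of $\square_{\varepsilon}$ and the spectral bound $\square_{\varepsilon}\le (K+\tfrac{\kappa}{\varepsilon})\mathbf{Id}$ coming from \eqref{bounds}, is precisely the standard route by which the cited lemma is established, so your proposal is correct and essentially the same argument. The one technical point you flag but leave vague can be closed exactly by the tool you name: essential self-adjointness gives $\mathrm{Dom}(\square_{\varepsilon}^{*})=\mathrm{Dom}(\overline{\square_{\varepsilon}})$, and $dP_sf$, being a smooth $L^2$ one-form whose distributional image $\square_{\varepsilon}dP_sf=dLP_sf$ is again in $L^2$, automatically belongs to the adjoint domain, which is what licenses the cancellation of the two terms (equivalently, one pairs against a compactly supported test one-form and moves $\square_{\varepsilon}$ across by self-adjointness).
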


We now turn to the stochastic representation of the semigroups we are interested in.  We first observe that from Theorem 4.2 in \cite{FBd}, the semigroup $P_t$ is stochastically complete. We denote by $(X_t)_{t\geq 0}$  the symmetric diffusion process generated by $\frac{1}{2}L$. The lifetime of the process is $\infty$.

Consider the process $\tau_t^{\varepsilon}:T_{X_t}^*\mathbb{M}\rightarrow T^*_{X_0}\mathbb{M}$ which is the solution of the following covariant Stratonovitch stochastic differential equation:
\begin{equation}\label{tau_t}
d[\tau_t^{\varepsilon}\alpha(X_t)]=\tau_t^{\varepsilon}\left( \nabla_{\circ dX_t}-\mathfrak{T}_{\circ dX_t}^{\varepsilon}-\frac{1}{2}
\left(\frac{1}{\varepsilon}\mathbf{J}^2 +\mathfrak{Ric}_{\mathcal{H}}\right)dt\right) \alpha(X_t),~~\tau_0^{\varepsilon}=\mathbf{Id},
\end{equation}
where $\alpha$ is any smooth one-form. It is easily seen that we have 
\begin{equation}\label{tau=M Theta}
\tau^{\varepsilon}_t=\mathcal{M}_{t}^{\varepsilon}\Theta_{t}^{\varepsilon}
\end{equation}
where the process $\Theta_t^{\varepsilon}: T_{X_t}^{*}\mathbb{M}\rightarrow T_{X_0}^{*}\mathbb{M}$ is the solution of the following covariant Stratonovitch stochastic differential equation: 
\begin{equation}\label{Theta equation}
d[\Theta_t^{\varepsilon}\alpha(X_t)]= \Theta_t^{\varepsilon}(\nabla_{\circ dX_t}-\mathfrak{L}^{\varepsilon}_{\circ dX_t})\alpha(X_t),~~\Theta_0^{\varepsilon}=\mathbf{Id}
\end{equation}
 where $\alpha$ is any smooth one-form. The multiplicative functional $(\mathcal{M}_t^{\varepsilon})_{t\geq 0}$ is the solution of the following equation
\begin{equation}\label{multiplicative function M_t}
\frac{d\mathcal{M}_t^{\varepsilon}}{dt}=-\frac{1}{2}\mathcal{M}_t^{\varepsilon}\Theta_t^{\varepsilon}\left(\frac{1}{\varepsilon}\mathbf{J}^2+\mathfrak{Ric}_{\mathcal{H}} \right)(\Theta_t^{\varepsilon})^{-1}, ~~\mathcal{M}_0^{\varepsilon}=\mathbf{Id}.
\end{equation} 

Since $\mathfrak{T}^{\varepsilon}$ is skew-symmetric,  $\Theta_t^{\varepsilon}$ is an isometry for the Riemannian metric $g_{\varepsilon}$. We deduce then from the assumption \eqref{bounds} that the following pointwise bound holds
\[
\| \tau^{\varepsilon}_t \|_\varepsilon \le e^{ \frac{1}{2} \left(K+\frac{\kappa}{\varepsilon} \right) t}.
\]

Combining the above Proposition 2.3 and applying Theorem 4.4 in \cite{FBb}, we get the the same representation for the derivative of the semigroup as Corolllary 4.5 in \cite{FBb}.

\begin{proposition}\label{dP_tf(X)}
Let $\varepsilon >0$. Let $f\in C_0^{\infty}(\mathbb{M})$. Then for every $t\geq0$, and $x\in \mathbb{M}$,
\[
dP_tf(x)=\mathbb{E}_x(\tau_t^{\varepsilon}df(X_t)).
\]
\end{proposition}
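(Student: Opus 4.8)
The plan is to deduce the statement from a more general Feynman--Kac representation for the one-form semigroup, namely
\[
Q_t^\varepsilon \alpha(x) = \mathbb{E}_x\!\left( \tau_t^\varepsilon \alpha(X_t) \right),
\]
valid for every smooth compactly supported one-form $\alpha$. Granting this, the proposition follows immediately: whenever $f \in C_0^\infty(\mathbb{M})$ the one-form $df$ is smooth and compactly supported, and the already-established intertwining $dP_t f = Q_t^\varepsilon df$ (the parabolic counterpart of Proposition \ref{commu1}) lets us specialize $\alpha = df$ to obtain $dP_t f(x) = \mathbb{E}_x(\tau_t^\varepsilon df(X_t))$.

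To establish the Feynman--Kac formula I would fix $t>0$ and introduce the backward family $\alpha_s := Q_{t-s}^\varepsilon \alpha$, $s \in [0,t]$, which solves $\partial_s \alpha_s = -\tfrac{1}{2}\square_\varepsilon \alpha_s$ with $\alpha_t = \alpha$, and then study the $T^*_{X_0}\mathbb{M}$-valued process $N_s := \tau_s^\varepsilon \alpha_s(X_s)$. The computational core is to show that $N_s$ is a local martingale on $[0,t]$. Applying It\^o's formula, $dN_s$ receives three contributions: the evolution of $\tau_s^\varepsilon$ dictated by \eqref{tau_t}, the explicit $s$-dependence of $\alpha_s$, and the motion of the diffusion $X_s$ generated by $\tfrac12 L$. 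Converting the Stratonovich differential $\tau_s^\varepsilon(\nabla_{\circ dX_s} - \mathfrak{T}^\varepsilon_{\circ dX_s})\alpha_s$ into It\^o form and using that $\nabla - \mathfrak{T}^\varepsilon$ is a $g_\varepsilon$-metric connection, the transport-plus-diffusion part yields the drift $-\tfrac12 \tau_s^\varepsilon(\nabla_{\mathcal H}-\mathfrak{T}^\varepsilon_{\mathcal H})^*(\nabla_{\mathcal H}-\mathfrak{T}^\varepsilon_{\mathcal H})\alpha_s(X_s)\,ds$; adding the explicit zeroth-order drift $-\tfrac12\tau_s^\varepsilon(\tfrac1\varepsilon\mathbf{J}^2 + \mathfrak{Ric}_{\mathcal H})\alpha_s(X_s)\,ds$ from \eqref{tau_t}, these terms assemble, by the very definition of $\square_\varepsilon$, into $\tfrac12 \tau_s^\varepsilon(\square_\varepsilon \alpha_s)(X_s)\,ds$. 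Since the $s$-derivative of $\alpha_s$ contributes $-\tfrac12 \tau_s^\varepsilon(\square_\varepsilon \alpha_s)(X_s)\,ds$, the two drifts cancel and $N_s$ has no finite-variation part.

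The remaining and, I expect, most delicate step is to upgrade $N_s$ from a local to a genuine martingale, the difficulty stemming from the non-compactness of $\mathbb{M}$. For this I would combine the pointwise estimate $\|\tau_s^\varepsilon\|_\varepsilon \le e^{\frac12(K + \kappa/\varepsilon)s}$ with a uniform bound $\sup_{s\in[0,t]}\|\alpha_s\|_\infty < \infty$; the latter follows from spectral control of the self-adjoint operator $\tfrac12\square_\varepsilon$, whose numerical range is bounded above thanks to the curvature hypothesis \eqref{bounds}, so that $Q_{t-s}^\varepsilon$ is uniformly bounded on $[0,t]$. These two bounds make $(N_s)_{s\in[0,t]}$ uniformly integrable, and a standard localization argument with stopping times followed by dominated convergence then shows it is a true martingale. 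Equating $\mathbb{E}_x(N_0)=\mathbb{E}_x(N_t)$ and reading off the endpoints $N_0 = \tau_0^\varepsilon \alpha_0(X_0) = Q_t^\varepsilon \alpha(x)$ (since $\tau_0^\varepsilon = \mathbf{Id}$ and $X_0=x$) and $N_t = \tau_t^\varepsilon \alpha(X_t)$ gives the Feynman--Kac identity, and hence the proposition.
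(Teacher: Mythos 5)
Your overall route is exactly the paper's: the paper proves this proposition by combining the intertwining $dP_tf=Q_t^\varepsilon df$ (quoted from \cite{FBd}) with the stochastic representation $Q_t^\varepsilon\alpha(x)=\mathbb{E}_x(\tau_t^\varepsilon\alpha(X_t))$, which it does not reprove but cites as Theorem 4.4 of \cite{FBb}; your proposal reconstructs the proof of that cited theorem. Your local-martingale computation for $N_s=\tau_s^\varepsilon(Q_{t-s}^\varepsilon\alpha)(X_s)$ is correct: the Stratonovich-to-It\^o conversion together with the zeroth-order drift in \eqref{tau_t} produces $\tfrac12\tau_s^\varepsilon\square_\varepsilon\alpha_s(X_s)\,ds$ (note that the Yang--Mills assumption $\delta_{\mathcal H}T=0$ is what identifies the It\^o-corrected connection Laplacian with $-(\nabla_{\mathcal H}-\mathfrak{T}^\varepsilon_{\mathcal H})^*(\nabla_{\mathcal H}-\mathfrak{T}^\varepsilon_{\mathcal H})$), and this cancels against the backward heat equation satisfied by $\alpha_s$.

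The gap is in your true-martingale upgrade. Boundedness of the numerical range of $\square_\varepsilon$ controls $Q_s^\varepsilon$ as an operator on $L^2$: it gives $\|Q_s^\varepsilon\alpha\|_{L^2}\le e^{\frac12(K+\kappa/\varepsilon)s}\|\alpha\|_{L^2}$ and nothing more. The bound you actually need, $\sup_{s\in[0,t]}\|\alpha_s\|_\infty<\infty$, is a sup-norm statement about sections of a vector bundle, and it does not follow from spectral theory: unlike the scalar case there is no maximum principle for the one-form semigroup, and this $L^\infty$ control is precisely the nontrivial point. Nor can uniform integrability be salvaged directly from the $L^2$ bound: $\mathbb{E}_x\left(\|\alpha_s(X_s)\|^2\right)=P_s(\|\alpha_s\|^2)(x)$, and dominating this by $\|\alpha_s\|_{L^2}^2$ would require a sup bound on the heat kernel $p_s(x,\cdot)$, which blows up as $s\to 0^+$. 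Two standard repairs: (i) prove the pointwise Hess--Schrader--Uhlenbrock/Kato--Simon domination $\|Q_s^\varepsilon\eta\|\le e^{\frac12(K+\kappa/\varepsilon)s}P_s(\|\eta\|)$ via Kato's inequality for the $g_\varepsilon$-metric connection $\nabla-\mathfrak{T}^\varepsilon$; this supplies the missing $L^\infty$ bound, after which your localization argument closes. (ii) Bypass uniform integrability altogether: set $S_t\eta(x)=\mathbb{E}_x(\tau_t^\varepsilon\eta(X_t))$, observe that the pointwise bound $\|\tau_t^\varepsilon\|_\varepsilon\le e^{\frac12(K+\kappa/\varepsilon)t}$ and the symmetry of $P_t$ make $(S_t)$ a strongly continuous semigroup on $L^2$ whose generator extends $\tfrac12\square_\varepsilon$ on compactly supported smooth one-forms (this is the content of your local-martingale computation), and invoke essential self-adjointness of $\square_\varepsilon$ --- which is exactly why the paper records it via Lemma 4.3 of \cite{FBb} --- to conclude $S_t=Q_t^\varepsilon$. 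With either repair your argument is complete; as written, the step you yourself flagged as the most delicate one is not justified.
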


By following ideas in \cite{FBb}, this representation of the derivative of the semigroup allows to prove an integration by parts formula and a related Clark-Ocone representation. The details of the proof are let to the reader.

From now on, we fix a point $x \in \M$ as the starting point of the diffusion process $(X_t)_{t \ge 0}$. The stochastic parallel transport for the Bott connection $\nabla$ along the paths of $(X_t)_{t\geq 0}$ will be denoted by $\para_{0,t}.$ Since the connection $\nabla$ is horizontal, the map $\para_{0,t}:T_{x}\mathbb{M}\rightarrow T_{X_t}\mathbb{M}$ is an isometry that preserves the horizontal bundle, that is, if $u\in \mathcal{H}_{x}$, then $\para_{0,t}u\in \mathcal{H}_{X_t}$. The anti-development of $(X_t)_{t\geq 0}$, 
\[
B_t=\int_0^t\para^{-1}_{0,s}\circ dX_s,
\]
is a Brownian motion in the horizontal space $\mathcal{H}_{x}.$

\begin{proposition}\label{IBP at a point}
Let $\varepsilon >0$. For any $C^1$ and adapted process $\gamma :\mathbb{R}_{\geq 0}\rightarrow \mathcal{H}_x$ such that $\mathbb{E}_x(\int_0^{\infty}\|\gamma'(s)\|_{\mathcal{H}}^2ds)<\infty$ and any $f\in C_0^{\infty}(\mathbb{M}),~t\geq 0$,
\[
\mathbb{E}_x\left( f(X_t)\int_0^t\langle \gamma'(s),dB_s\rangle_{\mathcal{H}}\right)=\mathbb{E}_x\left(\left\langle\tau_t^{\varepsilon}df(X_t),\int_0^t(\tau_s^{\varepsilon,*})^{-1}\para_{0,s}\gamma'(s)ds\right\rangle \right).
\]
\end{proposition}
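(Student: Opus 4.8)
The plan is to combine the probabilistic representation of Proposition~\ref{dP_tf(X)} with It\^o's isometry, in the spirit of Bismut's integration by parts formula; I fix $t\ge 0$ and work with the filtration $(\F_s)_{0\le s\le t}$ of the diffusion. The first step is a \emph{martingale representation} of $f(X_t)$. The conditional expectation $m_s=\mathbb{E}_x\left[f(X_t)\mid\F_s\right]=P_{t-s}f(X_s)$ is a scalar martingale, and applying It\^o's formula to the time–dependent function $(s,y)\mapsto P_{t-s}f(y)$ along $(X_s)$, together with the heat equation $\p_sP_{t-s}f=-\tfrac12 LP_{t-s}f$ which cancels the drift, gives
$$ f(X_t)=P_tf(x)+\int_0^t\ang{\para_{0,s}^{-1}\nabla_{\Ho} P_{t-s}f(X_s),dB_s}_{\Ho}. $$
Pairing with $\int_0^t\ang{\ga'(s),dB_s}_{\Ho}$ and applying the It\^o isometry — the deterministic term $P_tf(x)$ drops out since the stochastic integral is centered — one obtains
$$ \mathbb{E}_x\left[f(X_t)\int_0^t\ang{\ga'(s),dB_s}_{\Ho}\right]=\mathbb{E}_x\left[\int_0^t\ang{dP_{t-s}f(X_s),\para_{0,s}\ga'(s)}ds\right], $$
where I used that $\para_{0,s}$ is an isometry to move it across the pairing and identified $\nabla_{\Ho}$ with $d$ through the metric.

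The second step \emph{inserts the transport $\tau^\varepsilon$}. The key is the identity $\mathbb{E}_x\left[\tau_t^{\varepsilon}df(X_t)\mid\F_s\right]=\tau_s^{\varepsilon}\,dP_{t-s}f(X_s)$ for $0\le s\le t$. To establish it I would show that the $T^*_x\M$–valued process $M_s=\tau_s^{\varepsilon}\,dP_{t-s}f(X_s)$ is a martingale. Using the covariant Stratonovich equation \eqref{tau_t} for $\tau^\varepsilon$ together with the intertwining $dLf=\square_\varepsilon df$ of Proposition~\ref{commu1} (equivalently $\p_s\,dP_{t-s}f=-\tfrac12\square_\varepsilon\,dP_{t-s}f$), the covariant It\^o formula shows that the finite–variation part of $M_s$ is governed by $\left(\p_s+\tfrac12\square_\varepsilon\right)dP_{t-s}f$, which vanishes; the pointwise bound $\|\tau_s^\varepsilon\|_\varepsilon\le e^{\frac12(K+\ka/\varepsilon)s}$ and $f\in\T$ promote the resulting local martingale to a true martingale. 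Since $M_t=\tau_t^\varepsilon df(X_t)$, conditioning yields the identity, hence $dP_{t-s}f(X_s)=(\tau_s^\varepsilon)^{-1}\,\mathbb{E}_x\left[\tau_t^\varepsilon df(X_t)\mid\F_s\right]$.

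The third step is the \emph{conclusion}. Substituting this into the integrand on the right of the first step and using that $\tau_s^\varepsilon,\para_{0,s}$ and $\ga'(s)$ are $\F_s$–measurable, I pull the conditional expectation outside and move $(\tau_s^\varepsilon)^{-1}$ across the pairing; by definition of the adjoint this turns it into $(\tau_s^{\varepsilon,*})^{-1}$ acting on $\para_{0,s}\ga'(s)$, so that
$$ \ang{dP_{t-s}f(X_s),\para_{0,s}\ga'(s)}=\mathbb{E}_x\left[\ang{\tau_t^\varepsilon df(X_t),(\tau_s^{\varepsilon,*})^{-1}\para_{0,s}\ga'(s)}\mid\F_s\right]. $$
Taking $\mathbb{E}_x$, invoking the tower property, and exchanging the $s$–integral with the expectation by Fubini (legitimate under $\mathbb{E}_x\left[\int_0^\infty\|\ga'(s)\|_\Ho^2ds\right]<\infty$ and the exponential bound on $\tau^\varepsilon$) gives
$$ \mathbb{E}_x\left[\int_0^t\ang{\tau_t^\varepsilon df(X_t),(\tau_s^{\varepsilon,*})^{-1}\para_{0,s}\ga'(s)}ds\right]=\mathbb{E}_x\left[\ang{\tau_t^\varepsilon df(X_t),\int_0^t(\tau_s^{\varepsilon,*})^{-1}\para_{0,s}\ga'(s)ds}\right], $$
which is the asserted formula.

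I expect the crux to be the martingale identity of the second step: checking that the drift of $M_s=\tau_s^\varepsilon\,dP_{t-s}f(X_s)$ vanishes requires the covariant It\^o calculus for the time–dependent one–form $dP_{t-s}f$ along $(X_s)$ and a careful use of the Weitzenb\"ock intertwining $dLf=\square_\varepsilon df$, while upgrading the local martingale to a genuine one — so that conditioning is valid — rests on the $L^2$ control furnished by \eqref{bounds} and the compact support of $f$. The remaining manipulations, namely the It\^o isometry, the adjoint identity, and the Fubini exchange, are routine given the stated integrability hypothesis on $\ga$.
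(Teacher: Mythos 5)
Your proof is correct and is essentially the paper's own route: the paper leaves this argument to the reader, pointing to the ideas of \cite{FBb}, and those ideas are exactly your three steps — the martingale representation of $f(X_t)$ via It\^o's formula and the heat equation, the martingale property of $M_s=\tau_s^{\varepsilon}\,dP_{t-s}f(X_s)$ (which rests on the intertwining of Proposition \ref{commu1} and the defining equation \eqref{tau_t}, and contains Proposition \ref{dP_tf(X)} as the case $s=0$), and the It\^o-isometry/adjoint/Fubini manipulations. I see no gaps; your identification of the martingale identity as the crux, and your use of the pointwise bound on $\tau^{\varepsilon}$ together with $f\in C_0^{\infty}(\mathbb{M})$ to upgrade the local martingale, is precisely how the cited reference handles it.
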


\begin{proposition}\label{clark-ocone n=1}
Let $\varepsilon >0$. For every $f\in C_0^{\infty}(\mathbb{M})$, and every $t>0,$
\[
f(X_t)=P_tf(x)+\int_0^t\left \langle \mathbb{E}_x((\tau_s^{\varepsilon})^{-1}\tau^{\varepsilon}_tf(X_t)|\mathcal{F}_s),\para_{0,s}dB_s \right \rangle
\]
\end{proposition}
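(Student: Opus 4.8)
This is a martingale representation theorem for the functional $f(X_t)$, and the natural approach is to establish it by differentiating the martingale $M_s := \mathbb{E}_x(f(X_t) \mid \mathcal{F}_s)$ for $s \in [0,t]$ and identifying its integrand via the integration by parts formula of Proposition \ref{IBP at a point}.

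Let me sketch the plan.

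The plan is to identify the Clark-Ocone integrand by testing the representation against arbitrary Cameron-Martin directions and matching it with the integration by parts formula. Concretely, I would start from the martingale $M_s = \mathbb{E}_x(f(X_t)\mid\mathcal{F}_s)$, which satisfies $M_0 = P_t f(x)$ and $M_t = f(X_t)$. By the martingale representation theorem for the Brownian motion $B$ in $\mathcal{H}_x$, there is a unique predictable $\mathcal{H}_x$-valued process $(a_s)_{0\le s\le t}$ with $M_t = M_0 + \int_0^t \langle a_s, dB_s\rangle_{\mathcal{H}}$. The whole task is to show $a_s = \para_{0,s}^{-1}\,\mathbb{E}_x\!\big((\tau_s^{\varepsilon})^{-1}\tau_t^{\varepsilon} df(X_t)\mid\mathcal{F}_s\big)$ (note the stated formula absorbs one $\para_{0,s}$ into the inner product, so I must be careful about where the parallel transport and its inverse land).

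First I would fix a deterministic (or adapted) $C^1$ direction $\gamma$ with $\gamma(0)=0$ and $\mathbb{E}_x\int_0^\infty\|\gamma'(s)\|^2\,ds<\infty$, and compute $\mathbb{E}_x\big(f(X_t)\int_0^t\langle\gamma'(s),dB_s\rangle_{\mathcal{H}}\big)$ in two ways. On one side, applying the martingale representation and Itô isometry gives $\mathbb{E}_x\int_0^t\langle a_s,\gamma'(s)\rangle_{\mathcal{H}}\,ds$, using that $\int_0^t\langle\gamma'(s),dB_s\rangle$ is itself a martingale so only the bracket with $M$ survives. On the other side, Proposition \ref{IBP at a point} expresses the same quantity as $\mathbb{E}_x\big\langle\tau_t^\varepsilon df(X_t),\int_0^t(\tau_s^{\varepsilon,*})^{-1}\para_{0,s}\gamma'(s)\,ds\big\rangle$. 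The next step is to massage the right-hand side of the IBP formula into the form $\mathbb{E}_x\int_0^t\langle b_s,\gamma'(s)\rangle_{\mathcal{H}}\,ds$ for an explicit $b_s$: I would move $\tau_t^\varepsilon$ across the pairing (writing $\langle\tau_t^\varepsilon\alpha,(\tau_s^{\varepsilon,*})^{-1}u\rangle=\langle(\tau_s^\varepsilon)^{-1}\tau_t^\varepsilon\alpha,u\rangle$ by adjointness), use Fubini to bring the time integral outside, and then condition on $\mathcal{F}_s$ and pull the $\mathcal{F}_s$-measurable $\para_{0,s}\gamma'(s)$ out of the conditional expectation. This yields $b_s=\para_{0,s}^{*}\,\mathbb{E}_x\big((\tau_s^\varepsilon)^{-1}\tau_t^\varepsilon df(X_t)\mid\mathcal{F}_s\big)$, which matches the claimed integrand once one uses that $\para_{0,s}$ is an isometry so that pairing $b_s$ against $\gamma'$ in $\mathcal{H}_x$ is the same as pairing $\mathbb{E}_x(\cdots\mid\mathcal{F}_s)$ against $\para_{0,s}\gamma'$, i.e.\ absorbing the transport into the Brownian increment as written. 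Since $\gamma'$ ranges over a dense set of adapted $L^2$ processes, I conclude $a_s=b_s$ almost everywhere, which is exactly the assertion.

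The main obstacle I anticipate is purely bookkeeping rather than conceptual: keeping the duality between $T\mathbb{M}$ and $T^*\mathbb{M}$, the isometry properties of $\para_{0,s}$ and of $\Theta_t^\varepsilon$ (for $g_\varepsilon$, not $g$), and the placement of the adjoint $\tau_s^{\varepsilon,*}$ versus the inverse $(\tau_s^\varepsilon)^{-1}$ consistent throughout, so that the one-form $df(X_t)$ is transported correctly and the final pairing lands in the right space. A secondary technical point is justifying the interchange of conditional expectation and the $ds$-integral (Fubini for conditional expectations) and the integrability needed to apply the Itô isometry and martingale representation; these are controlled by the pointwise bound $\|\tau_t^\varepsilon\|_\varepsilon\le e^{\frac{1}{2}(K+\kappa/\varepsilon)t}$ together with the assumption on $\gamma$, and since the authors explicitly leave the details to the reader, I would follow the analogous argument in \cite{FBb} verbatim at this level, emphasizing only the steps where the foliated structure (the factorization $\tau^\varepsilon=\mathcal{M}^\varepsilon\Theta^\varepsilon$ and the $g_\varepsilon$-isometry of $\Theta^\varepsilon$) enters.
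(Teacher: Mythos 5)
Your mechanics are essentially correct, but your argument runs in the opposite logical direction from the paper's. The paper leaves the proof to the reader with a pointer to \cite{FBb}, and there (as in Section 3 of this paper, where the path-space integration by parts formula is presented as an ``immediate corollary'' of the Clark-Ocone representation of Proposition \ref{clark-ocone}) the intended order is: derivative formula $\Rightarrow$ Clark-Ocone $\Rightarrow$ integration by parts. The direct proof is also shorter than yours and needs neither the martingale representation theorem nor a density argument: by the Markov property, $M_s=\mathbb{E}_x(f(X_t)\mid \mathcal{F}_s)=P_{t-s}f(X_s)$; It\^o's formula cancels the drift (since $\partial_s P_{t-s}f=-\frac{1}{2}LP_{t-s}f$) and leaves $dM_s=\langle d(P_{t-s}f)(X_s),\para_{0,s}dB_s\rangle$; then Proposition \ref{dP_tf(X)} applied at the point $X_s$, together with the multiplicative (cocycle) property of $\tau^{\varepsilon}$ and the Markov property, identifies $d(P_{t-s}f)(X_s)=\mathbb{E}_x\left((\tau_s^{\varepsilon})^{-1}\tau_t^{\varepsilon}\,df(X_t)\mid \mathcal{F}_s\right)$; integrating over $[0,t]$ gives the formula. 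This is exactly the computation that the inductions proving Propositions \ref{gradient of expectation formula} and \ref{clark-ocone} build on, and it also shows that the paper's statement has a typo: the integrand should involve $df(X_t)$, as you wrote, not $f(X_t)$.

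The one substantive issue with your route is a risk of circularity: you consume Proposition \ref{IBP at a point}, which the paper also states without proof, and whose natural proof, following \cite{FBb} and the pattern of Section 3, is precisely to multiply the Clark-Ocone representation of Proposition \ref{clark-ocone n=1} by $\int_0^t\langle\gamma'(s),dB_s\rangle_{\mathcal{H}}$ and apply It\^o's isometry. Unless you supply an independent proof of Proposition \ref{IBP at a point} (for instance a Bismut--Girsanov quasi-invariance argument), your derivation establishes only the equivalence of the two propositions, not either one of them. A secondary point you should make explicit is that the martingale representation theorem requires $(\mathcal{F}_s)$ to be the augmented natural filtration of the Brownian motion $B$; this holds because $X$ is a strong solution of the development equation driven by $B$ while $B$ is the anti-development of $X$, so the two filtrations coincide. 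Granting these two points, your duality computation (the adjointness step $\langle\tau_t^{\varepsilon}\alpha,(\tau_s^{\varepsilon,*})^{-1}u\rangle=\langle(\tau_s^{\varepsilon})^{-1}\tau_t^{\varepsilon}\alpha,u\rangle$, the conditioning on $\mathcal{F}_s$, and the density of derivatives of adapted $C^1$ processes in adapted $L^2$) is correct, and it buys something the direct proof does not: it shows the Clark-Ocone integrand is forced by the integration by parts formula alone, for any functional to which such a formula applies.
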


\section{Malliavin calculus on the horizontal path space}

In the following we are going to generalized the Clark-Ocone formula in our new setting. Let us first recall what is a cylinder function.
%Denote 
%\[
%H\equiv \{ \gamma :[0,1]\rightarrow \mathcal{H}_x,x=X_0;~ \gamma(0)=0~ \texttt{and}~(\gamma,\gamma)\equiv\int_0^1\|\gamma'(s)\|^2_{\mathcal{H}_x}ds<\infty\}
%\]
%$H$ now is just the usual Cameron-Martin space with $\mathbb{R}^d$ or $T_0\mathbb{M}$ replaced by the isometric innerproduct space $\mathcal{H}_x=\mathcal{H}_{X_0}$, where we use the canonical variation metric $g_{\varepsilon}=g_{\mathcal{H}}\otimes g_{\mathcal{V}}$.
%\begin{definition}[Cameron-Martin process]
%A Cameron-Martin process $\gamma$ is a  $\mathcal{H}_x=\mathcal{H}_{X_0}$ valued process such that $s\rightarrow \gamma(s)$ is in $H~a.s..$
%\end{definition}
\begin{definition}\label{cylinder}
A random variable $F$ is called a smooth cylinder function on the horizontal  path space $W_\mathcal{H} (\mathbb{M})$ of $\mathbb{M}$, if there exists a partition $\{0=t_0<t_1<t_2<,\cdots,<t_n\leq  T\}$ of $[0,T]$ and $f\in C_0^{\infty}(\mathbb{M}^{n})$ such that 
\begin{equation}\label{cylinder function}
F=f(X_{t_1},\cdots,X_{t_n}).
\end{equation}
The set of smooth cylinder functions is denoted by $\mathcal{C}$.
\end{definition}

Then,  inspired by an idea of Fang-Malliavin in \cite{Fang} and their definition of different gradients on the path space, we set the following definitions

\begin{definition}\label{gradient definition}
Let $\varepsilon >0$. For  $F=f(X_{t_1},\cdots,X_{t_n})$, $f\in C_0^{\infty}(\mathbb{M}^n)$, we define the
\begin{itemize}
\item \textbf{Intrinsic gradient:}
\[
D^\varepsilon_tF=\sum_{i=1}^n\mathbf{1}_{0,t_i}(t)\Theta_{t_i}^{\varepsilon}d_if(X_{t_1},\cdots,X_{t_n}), \quad 0 \le t \le T
\] 

\item \textbf{Damped gradient:}
\[
\tilde{D}_t^{\varepsilon}F=\sum_{i=1}^n\mathbf{1}_{[0,t_i]}(t)(\tau_t^{\varepsilon})^{-1}\tau_{t_i}^{\varepsilon}d_if(X_{t_1},\cdots,X_{t_n}), \quad  0 \le t \le T.
\]

\end{itemize}
\end{definition}

By using the intrinsic gradient, we can then define a one-parameter family of Ornstein-Uhlenbeck type operators by
\[
\mathcal{L}_\varepsilon=-(D^\varepsilon)^* D^\varepsilon
\]

Our main objective will be to prove an integration by parts formula for the damped gradient. We start with a preliminary lemma whose Riemmanian ancestor was  first proved by Hsu in \cite{Hsu}.

\begin{proposition}\label{gradient of expectation formula}
Let $\varepsilon >0$. Let  $F=f(X_{t_1},\cdots,X_{t_n})$, $f\in C^{\infty}_0(\mathbb{M}^n)$. We have
\[
d\mathbb{E}_x(F)=\mathbb{E}_x\left(\sum_{i=1}^n\tau_{t_i}^{\varepsilon}d_if(X_{t_1},\cdots,X_{t_n})\right),
\]
where the derivative $d$ is computed with respect to the starting point $x$.
\end{proposition}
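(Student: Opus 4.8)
The plan is to argue by induction on the number $n$ of times in the partition, peeling off the last time $t_n$ by the Markov property of $(X_t)_{t\ge 0}$ and reducing to $n-1$. The base case $n=1$ is exactly Proposition \ref{dP_tf(X)}: since $\mathbb{E}_x(f(X_{t_1}))=P_{t_1}f(x)$, one gets $d\mathbb{E}_x(f(X_{t_1}))=dP_{t_1}f(x)=\mathbb{E}_x(\tau_{t_1}^\varepsilon df(X_{t_1}))$. For the inductive step I introduce the function on $\M^{n-1}$ obtained by integrating out the last variable with the semigroup,
\[
g(x_1,\dots,x_{n-1})=\big(P_{t_n-t_{n-1}}\,f(x_1,\dots,x_{n-1},\cdot)\big)(x_{n-1}),
\]
so that, by the Markov property applied at time $t_{n-1}$, $\mathbb{E}_x\big(f(X_{t_1},\dots,X_{t_n})\big)=\mathbb{E}_x\big(g(X_{t_1},\dots,X_{t_{n-1}})\big)$. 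Applying the induction hypothesis to $g$ reduces the problem to computing the gradients $d_ig$.

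For $i\le n-2$ the $i$-th variable does not interact with the semigroup, so $d_ig=\big(P_{t_n-t_{n-1}}\,d_if\big)(x_{n-1})$, the semigroup acting on the last slot of $d_if$. For $i=n-1$ the point $x_{n-1}$ plays two roles, as the $(n-1)$-th argument of $f$ and as the base point of the semigroup; differentiating both contributions and using Proposition \ref{dP_tf(X)} for the base-point derivative yields
\[
d_{n-1}g=\big(P_{t_n-t_{n-1}}\,d_{n-1}f\big)(x_{n-1})+\mathbb{E}_{x_{n-1}}\big(\tau_{t_n-t_{n-1}}^\varepsilon\,d_nf(x_1,\dots,x_{n-1},X_{t_n-t_{n-1}})\big).
\]

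It then remains to recombine the terms. The contributions $\tau_{t_i}^\varepsilon\big(P_{t_n-t_{n-1}}d_if\big)$ for $i\le n-1$ collapse back to $\mathbb{E}_x(\tau_{t_i}^\varepsilon d_if(X_{t_1},\dots,X_{t_n}))$ by the Markov property, since $\tau_{t_i}^\varepsilon$ is $\F_{t_{n-1}}$-measurable and $\big(P_{t_n-t_{n-1}}d_if\big)(X_{t_{n-1}})=\mathbb{E}(d_if(X_{t_1},\dots,X_{t_n})\mid\F_{t_{n-1}})$. For the remaining term, coming from the base-point derivative, I would invoke the multiplicative cocycle property of the solution of \eqref{tau_t}: writing $\tau_{s,t}^\varepsilon$ for the solution of the same covariant equation started at time $s$ from the identity, one has $\tau_t^\varepsilon=\tau_s^\varepsilon\circ\tau_{s,t}^\varepsilon$, and by the time-homogeneous Markov property the conditional law of $(\tau_{t_{n-1},t_n}^\varepsilon,X_{t_n})$ given $\F_{t_{n-1}}$ coincides with that of $(\tau_{t_n-t_{n-1}}^\varepsilon,X_{t_n-t_{n-1}})$ started from $X_{t_{n-1}}$. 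Hence
\[
\tau_{t_{n-1}}^\varepsilon\,\mathbb{E}_{X_{t_{n-1}}}\big(\tau_{t_n-t_{n-1}}^\varepsilon\,d_nf(\cdot,X_{t_n-t_{n-1}})\big)=\mathbb{E}\big(\tau_{t_n}^\varepsilon\,d_nf(X_{t_1},\dots,X_{t_n})\mid\F_{t_{n-1}}\big),
\]
and taking expectations supplies the missing $i=n$ summand; summing all contributions gives the claim.

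The main obstacle, and the only place where genuine care is needed, is the base-point derivative: justifying the interchange of $d$ with the expectation and the semigroup (which rests on the smoothness and compact support of $f$ together with the pointwise bound $\|\tau_t^\varepsilon\|_\varepsilon\le e^{\frac12(K+\kappa/\varepsilon)t}$ for dominated convergence), and above all establishing the cocycle identity $\tau_t^\varepsilon=\tau_s^\varepsilon\circ\tau_{s,t}^\varepsilon$ with the correct $\F_s$-measurability and conditional law. Everything else is bookkeeping driven by the Markov property; the endomorphism-valued, non-self-adjoint nature of $\tau^\varepsilon$ matters only in keeping the order of composition straight.
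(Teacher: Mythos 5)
Your proof is correct and is essentially the paper's own argument in mirror image: the paper also proceeds by induction on $n$ using Proposition \ref{dP_tf(X)}, the Markov property, and the multiplicative (cocycle) property of $\tau^{\varepsilon}$, except that it peels off the \emph{first} time $t_1$ (writing $\mathbb{E}_x(F)=\mathbb{E}_x(g(X_{t_1}))$ with $g(y)=\mathbb{E}_y(f(y,X_{t_2-t_1},\cdots,X_{t_n-t_1}))$, applying Proposition \ref{dP_tf(X)} to the outer expectation and the induction hypothesis inside $g$), whereas you peel off the \emph{last} time $t_n$ and swap the roles of the two ingredients. The decompositions are interchangeable and of equal difficulty, so this is the same proof; your closing remarks on the cocycle identity and on interchanging $d$ with expectations in fact spell out steps the paper leaves implicit.
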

\begin{proof}
We will proceed our proof by induction. Consider a cylinder function $F=f(X_{t_1},\cdots,X_{t_n})$. For $n=1$, the statement follows from  Proposition \ref{dP_tf(X)}. Now we assume that the proposition is true for the case $n-1$ and we are going to show the case $n$.
By the Markov Property, we have
\[
\mathbb{E}_x(F)=\mathbb{E}_x(g(X_{t_1})),~\text{where}~g(y)=\mathbb{E}_y(f(y,X_{t_2-t_1},\cdots,X_{t_n-t_1}))
\]
From  Proposition \ref{dP_tf(X)},  we have
\[
d\mathbb{E}_x(F)=\mathbb{E}(\tau_{t_1}^{\varepsilon}dg(X_{t_1})).
\]
By using the induction assumption, we have
\[
dg(y)=\mathbb{E}_y(d_1f(y,X_{t_2-t_1}))+\mathbb{E}_y(\sum_{i=2}^n\tau^{\varepsilon}_{t_i-t_1}d_if(y,X_{t_2-t_1}))
\]
Applying then the multiplicative property for $\tau_t^{\varepsilon}$ and the Markov property again, we obtain
\[
d\mathbb{E}_x(F)=\mathbb{E}_x\left(\sum_{i=1}^n\tau_{t_i}^{\varepsilon}d_if(X_{t_1},\cdots,X_{t_n})\right).
\]
\end{proof}

With this lemma in hands, we can now turn to the proof of one of our main results.

\begin{proposition}[Clark-Ocone formula] \label{clark-ocone}
Let $\varepsilon >0$. Let $F=f(X_{t_1},\cdots,X_{t_n})$, $f\in C^{\infty}_0(\mathbb{M}^n)$. Then
\[
F=\mathbb{E}_x(F)+\int_0^T \langle \mathbb{E}_x(\tilde{D}_s^{\varepsilon}F|\mathcal{F}_s),\para_{0,s}dB_s\rangle.
\]
\end{proposition}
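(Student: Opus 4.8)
The plan is to derive the Clark--Ocone representation from the integration by parts formula of Proposition~\ref{gradient of expectation formula}, following the classical strategy: identify the integrand of the martingale representation by testing against arbitrary adapted integrands and using the duality between $dB_s$ and the damped gradient. Since $F = f(X_{t_1},\dots,X_{t_n})$ with $t_n \le T$ is bounded and $\mathcal{F}_T$-measurable, the martingale $N_t := \mathbb{E}_x(F \mid \mathcal{F}_t)$ admits, by the martingale representation theorem for the horizontal Brownian motion $B$, a representation $N_t = \mathbb{E}_x(F) + \int_0^t \langle a_s, \para_{0,s}\, dB_s\rangle$ for a unique adapted process $a_s$ valued in $\mathcal{H}_x$. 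Taking $t = T$ and using $N_T = F$ gives the formula, so the real task is to show $a_s = \mathbb{E}_x(\tilde{D}_s^{\varepsilon} F \mid \mathcal{F}_s)$.

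The key step is to compute $a_s$ via its defining property: for any bounded adapted $\mathcal{H}_x$-valued process $\ell$, the It\^o isometry gives $\mathbb{E}_x\!\left( F \int_0^T \langle \ell_s, \para_{0,s}\, dB_s\rangle\right) = \mathbb{E}_x\!\left(\int_0^T \langle a_s, \ell_s\rangle_{\mathcal{H}}\, ds\right)$. First I would establish the single-point integration by parts identity (Proposition~\ref{IBP at a point}) in the multi-point cylinder setting, namely that for $\gamma$ with $\gamma' = \ell$ one has
\[
\mathbb{E}_x\!\left( F \int_0^T \langle \ell_s, dB_s\rangle_{\mathcal{H}}\right) = \mathbb{E}_x\!\left(\int_0^T \Big\langle \sum_{i=1}^n \mathbf{1}_{[0,t_i]}(s)(\tau_s^{\varepsilon,*})^{-1}\para_{0,s}\, \tau_{t_i}^{\varepsilon} d_i f,\ \ell_s\Big\rangle\, ds\right).
\]
This is obtained by applying Proposition~\ref{gradient of expectation formula} to the perturbed process, or more directly by combining the Markov property with the one-point formula at each time $t_i$, exactly as in the $n=1$ case of Proposition~\ref{clark-ocone n=1}. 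The inner bracket can be rewritten, using the relation $\tau_s^\varepsilon = \mathcal{M}_s^\varepsilon \Theta_s^\varepsilon$ and the isometry properties, as $\para_{0,s}(\tau_s^\varepsilon)^{-1}\tau_{t_i}^\varepsilon d_i f$ paired against $\ell_s$ in $\mathcal{H}_x$; summing over $i$ reproduces $\para_{0,s}\, \tilde{D}_s^\varepsilon F$ by the very definition of the damped gradient.

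Matching the two expressions for $\mathbb{E}_x( F \int_0^T \langle \ell_s, \para_{0,s}dB_s\rangle)$ over all adapted $\ell$ forces $a_s = \mathbb{E}_x(\para_{0,s}^{-1}\para_{0,s}\tilde{D}_s^\varepsilon F \mid \mathcal{F}_s) = \mathbb{E}_x(\tilde{D}_s^\varepsilon F \mid \mathcal{F}_s)$, where the conditional expectation appears because $a_s$ must be adapted while the raw integrand is only $\mathcal{F}_T$-measurable; the projection onto $\mathcal{F}_s$ is legitimate since $\ell_s$ is $\mathcal{F}_s$-measurable and can be pulled inside. I expect the main obstacle to be the careful bookkeeping of the transport maps: ensuring that the pairing $\langle \tau_t^\varepsilon df, \cdot\rangle$ appearing in Proposition~\ref{IBP at a point} is correctly transferred through $\para_{0,s}$ and the adjoint $(\tau_s^{\varepsilon,*})^{-1}$ so that the final integrand is genuinely the damped gradient $\tilde{D}_s^\varepsilon F = \sum_i \mathbf{1}_{[0,t_i]}(s)(\tau_s^\varepsilon)^{-1}\tau_{t_i}^\varepsilon d_i f$ and not some twisted version of it. Verifying this identification, together with confirming the integrability conditions so that It\^o isometry and the martingale representation apply, constitutes the technical heart of the argument.
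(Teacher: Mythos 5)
Your proposal is correct in outline, but it inverts the paper's logical order, so the two arguments are genuinely different. The paper proves the Clark--Ocone formula directly by induction on the number of partition points: the base case is Proposition \ref{clark-ocone n=1}, and the inductive step conditions at time $t_1$, writes $\mathbb{E}_{X_{t_1}}(F)=g(X_{t_1})$ via the Markov property, and uses Proposition \ref{gradient of expectation formula} together with the multiplicativity of $\tau^{\varepsilon}$ to glue the stochastic integrals on $[0,t_1]$ and $[t_1,t_n]$; the path-space integration by parts formula is then deduced \emph{afterwards} as an immediate corollary. You instead take the classical duality route: predictable representation of $\mathbb{E}_x(F\mid\mathcal{F}_t)$, a multi-point integration by parts formula, and identification of the integrand by testing against adapted $\ell$ via It\^o isometry. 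This route does work, and your transport bookkeeping is sound (the pairing $\langle \tau_{t_i}^{\varepsilon}d_if,(\tau_s^{\varepsilon,*})^{-1}\para_{0,s}\ell_s\rangle=\langle(\tau_s^{\varepsilon})^{-1}\tau_{t_i}^{\varepsilon}d_if,\para_{0,s}\ell_s\rangle$ is exactly the damped-gradient pairing, and integrability follows from the pointwise bound $\|\tau_t^{\varepsilon}\|_{\varepsilon}\le e^{\frac12(K+\kappa/\varepsilon)t}$ and compact support of $f$). But note two costs that the paper's constructive proof avoids. First, you must justify the martingale representation theorem for the filtration $(\mathcal{F}_t)$ with respect to the horizontal Brownian motion $B$, i.e.\ that the filtration generated by $X$ coincides with that of $B$; this is standard via the development correspondence but is an extra ingredient the paper never needs. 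Second, and more substantially, the multi-point IBP you invoke is precisely what the paper obtains \emph{from} Clark--Ocone, so you must prove it independently; your sketch (induction combining the Markov property, Proposition \ref{IBP at a point}, and Proposition \ref{gradient of expectation formula} to differentiate $g(y)=\mathbb{E}_y(f(y,\ldots))$ in the conditioning step) is the right plan and avoids circularity, but it is essentially the same induction the paper performs directly for Clark--Ocone. In other words, the technical heart has moved, not disappeared: your reorganization buys the conceptually transparent duality picture (Clark--Ocone as the adjoint statement of IBP) at the price of an extra appeal to the representation theorem, while the paper's induction constructs the integrand explicitly and gets the IBP for free.
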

\begin{proof}
We will still proceed the proof by induction. When $n=1$, it is true from Proposition \ref{clark-ocone n=1}. Now let us proceed to the case $n$ by assuming that the formula is true for the case $n-1$.
We first represent the function $F=f(X_{t_1},\cdots,X_{t_n})$ conditioned on starting from $X_{t_1}$ and use the case $n=1$.  We have
\begin{equation}\label{equation 3.7}
F
=\mathbb{E}_{X_{t_1}}(f(X_{t_1},\cdots,X_{t_n}))+\int_{t_1}^{t_{n}}\langle \mathbb{E}_x(\sum_{i=2}^n\mathbf{1}_{[t_1,t_i]}(t)(\tau_s^{\varepsilon})^{-1}\tau_{t_i-t_1}^{\varepsilon}d_if(X_{t_1},\cdots,X_{t_n}) )|\mathcal{F}_s),\para_{0,s}dB_s\rangle.
\end{equation}
By using the Markov property, we can then write
\[
\mathbb{E}_{X_{t_1}}(f(X_{t_1},\cdots,X_{t_n}))=g(X_{t_1}),~\text{where}~g(y)=\mathbb{E}_{y}(f(y,\cdots,X_{t_n-t_{1}}))
\] 
By using Proposition \ref{gradient of expectation formula}, we have
\[
g(X_{t_1})=\mathbb{E}_{x}(f(X_{t_1},\cdots,X_{t_{n-1}},X_{t_n}))+\int_0^{t_1}\langle\mathbb{E}_x((\tau_s^{\varepsilon})^{-1}\tau_{t_1}^{\varepsilon}dg(X_{t_1})|\mathcal{F}_s),\para_{0,s}dB_s\rangle.
\]
\[
=\mathbb{E}_{x}(f(X_{t_1}\cdots,X_{t_{n-1}},X_{t_n}))
+\int_0^{t_1}\langle\mathbb{E}_x((\tau_s^{\varepsilon})^{-1}\tau_{t_1}^{\varepsilon}d_1f(X_{t_1}\cdots,X_{t_n})|\mathcal{F}_s),\para_{0,s}dB_s\rangle.
\]
\[
+\int_0^{t_1}\langle\mathbb{E}_x((\tau_s^{\varepsilon})^{-1}\tau_{t_1}^{\varepsilon}\sum_{i=2}^n\mathbb{E}_{X_{t_1}}(\tau_{t_i-t_1}^{\varepsilon}d_if(X_{t_1}\cdots,X_{t_n}))|\mathcal{F}_s),\para_{0,s}dB_s\rangle.
\]
Thus by applying the multiplicative property of $\tau_t^{\varepsilon}$ and the Markov property, we obtain 
\[\mathbb{E}_{X_{t_1}}(f(X_{t_1},\cdots,X_{t_n}))
=\mathbb{E}_{x}(f(X_{t_1}\cdots,X_{t_{n-1}},X_{t_n}))
+\int_0^{t_1}\langle\mathbb{E}_x((\tau_s^{\varepsilon})^{-1}\tau_{t_1}^{\varepsilon}d_1f(X_{t_1}\cdots,X_{t_n})|\mathcal{F}_s),\para_{0,s}dB_s\rangle.
\]
\begin{equation}\label{3.8}
+\int_0^{t_1}\langle\mathbb{E}_x(\sum_{i=2}^n(\tau_s^{\varepsilon})^{-1}\tau_{t_i}^{\varepsilon}d_if(X_{t_1}\cdots,X_{t_n})|\mathcal{F}_s),\para_{0,s}dB_s\rangle.
\end{equation}
The proof is then completed by combining \eqref{equation 3.7} and \eqref{3.8}.
\end{proof}

An immediate corollary of the previous Clark-Ocone type representation is the following integration by parts formula on the horizontal path space.

\begin{corollary}
Let $\varepsilon >0$.  Let $F=f(X_{t_1},\cdots,X_{t_n})$, $f\in C^{\infty}_0(\mathbb{M}^n)$.  For any $C^1$ and adapted process $\gamma :[0,T] \rightarrow \mathcal{H}_x$ such that $\mathbb{E}_x(\int_0^{T}\|\gamma'(s)\|_{\mathcal{H}}^2ds)<\infty$,
\[
\mathbb{E}_x\left( F \int_0^T\langle \gamma'(s),dB_s\rangle_{\mathcal{H}}\right)=\mathbb{E}_x\left( \int_0^T\langle \tilde{D}_s^{\varepsilon}F ,\para_{0,s} \gamma'(s)\rangle ds \right).
\]

\end{corollary}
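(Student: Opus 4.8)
The plan is to derive this integration by parts formula directly from the Clark-Ocone representation in Proposition \ref{clark-ocone}. The strategy mirrors the classical finite-dimensional duality between a stochastic integral and a Malliavin derivative: once a functional is written as its mean plus a stochastic integral whose integrand is explicitly the conditional expectation of the damped gradient, pairing it against another stochastic integral and using the It\^o isometry produces exactly the desired identity.

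First I would apply Proposition \ref{clark-ocone} to write
\[
F=\mathbb{E}_x(F)+\int_0^T\langle \mathbb{E}_x(\tilde{D}_s^{\varepsilon}F\,|\,\mathcal{F}_s),\para_{0,s}dB_s\rangle.
\]
Next, since $\gamma$ is $C^1$ and adapted with $\mathbb{E}_x(\int_0^T\|\gamma'(s)\|_{\mathcal H}^2\,ds)<\infty$, the process $\int_0^T\langle\gamma'(s),dB_s\rangle_{\mathcal H}$ is a genuine It\^o integral with respect to the horizontal Brownian motion $B$ in $\mathcal{H}_x$, so it has zero mean. Multiplying $F$ by this integral and taking expectations, the deterministic term $\mathbb{E}_x(F)$ contributes nothing because $\mathbb{E}_x(\int_0^T\langle\gamma'(s),dB_s\rangle_{\mathcal H})=0$. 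Therefore
\[
\mathbb{E}_x\Bigl(F\int_0^T\langle\gamma'(s),dB_s\rangle_{\mathcal H}\Bigr)
=\mathbb{E}_x\Bigl(\Bigl(\int_0^T\langle\mathbb{E}_x(\tilde{D}_s^{\varepsilon}F\,|\,\mathcal{F}_s),\para_{0,s}dB_s\rangle\Bigr)\Bigl(\int_0^T\langle\gamma'(s),dB_s\rangle_{\mathcal H}\Bigr)\Bigr).
\]
I would then invoke the It\^o isometry for the product of two stochastic integrals against the same Brownian motion, which reduces the right-hand side to the expectation of the time integral of the pointwise inner product of the two integrands. Here one uses that $\para_{0,s}$ is an isometry preserving the horizontal bundle, so that the pairing $\langle\para_{0,s}^{-1}\mathbb{E}_x(\tilde{D}_s^{\varepsilon}F\,|\,\mathcal{F}_s),\gamma'(s)\rangle_{\mathcal H}$ is well defined, giving
\[
\mathbb{E}_x\Bigl(\int_0^T\langle\mathbb{E}_x(\tilde{D}_s^{\varepsilon}F\,|\,\mathcal{F}_s),\para_{0,s}\gamma'(s)\rangle\,ds\Bigr).
\]

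Finally I would remove the inner conditional expectation. Since $\gamma$ is adapted, $\para_{0,s}\gamma'(s)$ is $\mathcal{F}_s$-measurable, so by the tower property the conditioning on $\mathcal{F}_s$ can be dropped inside the expectation, yielding $\mathbb{E}_x(\int_0^T\langle\tilde{D}_s^{\varepsilon}F,\para_{0,s}\gamma'(s)\rangle\,ds)$, which is the claimed formula. The main technical point to be careful about is the justification of the It\^o isometry step and the integrability needed to apply it: one must check that $s\mapsto\mathbb{E}_x(\tilde{D}_s^{\varepsilon}F\,|\,\mathcal{F}_s)$ is square-integrable on $[0,T]\times\Omega$, which follows because $f\in C_0^{\infty}(\mathbb{M}^n)$ is bounded and the pointwise bound $\|\tau_t^{\varepsilon}\|_\varepsilon\le e^{\frac12(K+\kappa/\varepsilon)t}$ controls the damped gradient, combined with the finite-energy hypothesis on $\gamma$; the rest is a routine application of standard stochastic calculus.
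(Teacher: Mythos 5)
Your proof is correct and follows exactly the route the paper intends: the paper states this result as ``an immediate corollary'' of the Clark--Ocone formula (Proposition \ref{clark-ocone}) without writing out the details, and your argument---Clark--Ocone decomposition, vanishing mean of the It\^o integral, It\^o isometry via the isometry $\para_{0,s}$, then dropping the conditional expectation by adaptedness of $\para_{0,s}\gamma'(s)$---is precisely the standard filling-in of those details.
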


Let us remark that this corollary proves that $\tilde{D}^{\varepsilon}$ is closable.

\section{Log-Sobolev inequalities on the horizontal path space}

In this section, we prove a family of log-Sobolev inequalities on the horizontal path space of $\M$. To this end, we adapt to our setting a method which is due to Hsu \cite{Hsu} (see also \cite{Cap}).

Throughout the section we will assume that for every horizontal one-form $\eta$,
\begin{equation}\label{geometric assumption}
| \langle \mathfrak{Ric}_{\mathcal{H}} \eta, \eta  \rangle_\mathcal{H} | \leq K \| \eta \|^2_\mathcal{H} ,~~\langle \mathbf{J}^{*}\mathbf{J}\eta,\eta\rangle_{\mathcal{H}}\leq \kappa\|\eta\|_{\mathcal{H}}^2.
\end{equation}

\begin{theorem}\label{log-Sobolev inequality}
For every cylindric function $G\in \mathcal{C}$ we have the following log-Sobolev inequality.
\begin{equation*}
\mathbb{E}_x(G^2\ln G^2)-\mathbb{E}_x(G^2)\ln \mathbb{E}_x(G^2)\leq 2 e^{3T(K+\frac{\kappa}{\varepsilon})} \mathbb{E}_x \left( \int_0^T\|D^\varepsilon_sG\|_{\varepsilon}^2ds\right).
\end{equation*}
\end{theorem}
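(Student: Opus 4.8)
The plan is to adapt the martingale/Clark-Ocone method of \cite{Hsu,Cap} to the damped gradient $\tilde D^\varepsilon$, obtaining first a clean log-Sobolev inequality with constant $2$ for $\tilde D^\varepsilon$, and then to convert it into one for the intrinsic gradient $D^\varepsilon$ at the cost of the curvature factor $e^{3T(K+\kappa/\varepsilon)}$. First I would reduce to $G$ bounded with $G^2$ bounded away from $0$ (replacing $G^2$ by $G^2+\delta$ and letting $\delta\to0$ at the end), so that $\Phi(u)=u\ln u$ can be applied to the positive martingale $N_t:=\mathbb{E}_x(G^2\mid\mathcal F_t)$. Since $G^2$ is again a smooth cylinder function, Proposition \ref{clark-ocone} gives $dN_t=\langle\mathbb{E}_x(\tilde D_t^\varepsilon(G^2)\mid\mathcal F_t),\para_{0,t}dB_t\rangle$, so that $d\langle N\rangle_t=\|\mathbb{E}_x(\tilde D_t^\varepsilon(G^2)\mid\mathcal F_t)\|_\varepsilon^2\,dt$ (the anti-development $\para_{0,t}$ is a $g_\varepsilon$-isometry and $B$ is horizontal). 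Applying It\^o's formula to $\Phi(N_t)$, taking expectations so that the local-martingale part drops, and using $\Phi''(u)=1/u$, yields the exact identity
\[
\mathbb{E}_x(G^2\ln G^2)-\mathbb{E}_x(G^2)\ln\mathbb{E}_x(G^2)=\frac12\,\mathbb{E}_x\!\int_0^T\frac{\|\mathbb{E}_x(\tilde D_t^\varepsilon(G^2)\mid\mathcal F_t)\|_\varepsilon^2}{N_t}\,dt.
\]

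The decisive step is then a comparison between the conditioned damped gradient and the conditioned intrinsic gradient: for every cylinder function $H$ and every $t$,
\[
\|\mathbb{E}_x(\tilde D_t^\varepsilon H\mid\mathcal F_t)\|_\varepsilon\le e^{\frac32 T(K+\frac\kappa\varepsilon)}\,\|\mathbb{E}_x(D_t^\varepsilon H\mid\mathcal F_t)\|_\varepsilon .
\]
To prove this I would use the factorization $\tau_t^\varepsilon=\mathcal M_t^\varepsilon\Theta_t^\varepsilon$, the fact that $\Theta_t^\varepsilon$ is a $g_\varepsilon$-isometry, and the bounds $\|\mathcal M_t^\varepsilon\|_\varepsilon,\ \|(\mathcal M_t^\varepsilon)^{-1}\|_\varepsilon\le e^{\frac12(K+\frac\kappa\varepsilon)t}$, which now follow from the \emph{two-sided} assumption \eqref{geometric assumption} (so that both $\mathcal M^\varepsilon$ and its inverse are controlled, in contrast to the one-sided \eqref{bounds}). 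Conditioning on $\mathcal F_t$ over an interval $(t_{k-1},t_k)$ and invoking the Markov property together with the multiplicativity of $\tau^\varepsilon$ and $\Theta^\varepsilon$, both conditioned gradients collapse into (damped, respectively undamped) gradients of the forward conditional expectation, and the ratio of the damping is estimated through $\|\mathcal M^\varepsilon\|_\varepsilon$ and the commutation $dP_t=Q_t^\varepsilon d$ with $\|Q_t^\varepsilon\|_\varepsilon\le e^{\frac12(K+\frac\kappa\varepsilon)t}$.

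Finally, inserting this comparison into the entropy identity with $H=G^2$, writing $D_t^\varepsilon(G^2)=2G\,D_t^\varepsilon G$ and applying the conditional Cauchy-Schwarz inequality $\|\mathbb{E}_x(2G\,D_t^\varepsilon G\mid\mathcal F_t)\|_\varepsilon^2\le 4N_t\,\mathbb{E}_x(\|D_t^\varepsilon G\|_\varepsilon^2\mid\mathcal F_t)$, the factor $N_t$ cancels; integrating in $t$, taking expectations, and squaring the comparison constant gives exactly $2e^{3T(K+\kappa/\varepsilon)}\,\mathbb{E}_x\!\int_0^T\|D_t^\varepsilon G\|_\varepsilon^2\,dt$, as claimed.

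I expect the comparison inequality to be the main obstacle. The difficulty is that $\tilde D^\varepsilon$ and $D^\varepsilon$ combine the building blocks $d_if$ through different, index-dependent operators ($(\tau_t^\varepsilon)^{-1}\tau_{t_i}^\varepsilon$ versus $\Theta_{t_i}^\varepsilon$), so that no naive term-by-term bound is available; the estimate must be organized through the backward/conditional structure (the semigroup $Q^\varepsilon$), and it is precisely this cruder, non-term-by-term handling of the damping that inflates the exponent to $3T(K+\kappa/\varepsilon)$ rather than the $T(K+\kappa/\varepsilon)$ one might naively hope for. The remaining points — the $\delta$-regularization, the verification that the It\^o local martingale is a genuine martingale, and the closing limit — are routine.
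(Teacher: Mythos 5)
Your opening step is exactly the paper's first lemma (Lemma \ref{log sob damped}): the martingale $N_t=\mathbb{E}_x(G^2\mid\mathcal F_t)$, the Clark--Ocone representation of Proposition \ref{clark-ocone} applied to $G^2$, and It\^o's formula for $N_t\ln N_t$. The gap is your ``decisive step'', the pointwise-in-$t$ conditional comparison
\[
\|\mathbb{E}_x(\tilde D_t^\varepsilon H\mid\mathcal F_t)\|_\varepsilon\le e^{\frac32 T(K+\frac\kappa\varepsilon)}\,\|\mathbb{E}_x(D_t^\varepsilon H\mid\mathcal F_t)\|_\varepsilon .
\]
None of the ingredients you invoke proves this, and it should be expected to be false. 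The asymmetry is the following: by multiplicativity of $\tau^\varepsilon$, the Markov property and Proposition \ref{gradient of expectation formula}, the conditioned \emph{damped} gradient does collapse to an honest differential, $\mathbb{E}_x(\tilde D_t^\varepsilon H\mid\mathcal F_t)=dh_t(X_t)$ with $h_t$ the forward conditional expectation (past frozen). But there is no ``undamped gradient of the forward conditional expectation'': $\mathbb{E}_x(D_t^\varepsilon H\mid\mathcal F_t)$ is an average of isometrically transported differentials and is not the differential of any function, nor is it linked to $dh_t$ by the commutation $dP_t=Q_t^\varepsilon d$ or by norm bounds on $\mathcal M^\varepsilon$. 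Already for $n=1$, $H=g(X_{t_1})$, $t<t_1$, $s=t_1-t$, your inequality reads (using $\tau^\varepsilon=\mathcal M^\varepsilon\Theta^\varepsilon$ from \eqref{tau=M Theta} and that $\Theta^\varepsilon$ is a $g_\varepsilon$-isometry)
\[
\bigl\|\mathbb{E}_y\bigl(\mathcal M_s^\varepsilon\Theta_s^\varepsilon dg(X_s)\bigr)\bigr\|_\varepsilon\le C\,\bigl\|\mathbb{E}_y\bigl(\Theta_s^\varepsilon dg(X_s)\bigr)\bigr\|_\varepsilon ,
\]
i.e.\ inserting the path-dependent damping $\mathcal M_s^\varepsilon$ inside the expectation inflates the norm by at most a constant. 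This fails whenever the unweighted mean $\mathbb{E}_y(\Theta_s^\varepsilon dg(X_s))$ vanishes by cancellation while the $\mathcal M^\varepsilon$-weighted mean does not; since $\kappa>0$ here, the damping $\frac1\varepsilon\mathbf J^2+\mathfrak{Ric}_{\mathcal H}$ is never trivial and correlates with the path, so such $g$ exist generically and no finite constant can work. Your entropy estimate therefore cannot be closed along this route.

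The paper escapes precisely by inverting the order of your two operations. It applies the conditional Cauchy--Schwarz inequality \emph{first}, with the damped gradient, so that the entropy is bounded by $2\,\mathbb{E}_x\bigl(\int_0^T\|\tilde D_s^\varepsilon G\|_\varepsilon^2\,ds\bigr)$ with no conditional expectations remaining (Lemma \ref{log sob damped}); only then does it compare gradients, and the comparison it proves is the time-integrated, norm-inside statement
\[
\mathbb{E}_x\left(\int_0^T\|\tilde D_s^\varepsilon G\|_\varepsilon^2\,ds\right)\le e^{3T(K+\frac\kappa\varepsilon)}\,\mathbb{E}_x\left(\int_0^T\|D_s^\varepsilon G\|_\varepsilon^2\,ds\right),
\]
for which cancellation in conditional means is irrelevant. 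This is done in two lemmas: on each block $[t_{l-1},t_l]$ one writes $\tilde D_s^\varepsilon G=(\tau_s^\varepsilon)^{-1}\tau_{t_l}^\varepsilon\sum_{i\ge l}(\tau_{t_l}^\varepsilon)^{-1}\tau_{t_i}^\varepsilon d_if$ and uses the two-sided bounds \eqref{geometric assumption} (factor $e^{T(K+\kappa/\varepsilon)}$); then the block sums are compared with $z_l=\sum_{i\ge l}\Theta_{t_i}^\varepsilon d_if=D_s^\varepsilon G$, $s\in[t_{l-1},t_l)$, through an Abel summation in the increments of $\mathcal M^\varepsilon$ and Hsu's Hardy-type integral estimate (factor $e^{2T(K+\kappa/\varepsilon)}$); the cross terms are absorbed only after integrating in $s$ over $[0,T]$, which is exactly what a pointwise-in-$t$ comparison cannot provide. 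If you keep your architecture but swap the order --- Cauchy--Schwarz with $\tilde D^\varepsilon$ first, then the integrated comparison --- you recover the paper's proof with the same constant $2e^{3T(K+\kappa/\varepsilon)}$.
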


We split the proof in several lemmas.

\begin{lemma}\label{log sob damped}
We have the following inequality
\begin{equation*}
\mathbb{E}_x(G^2\ln G^2)-\mathbb{E}_x(G^2)\ln \mathbb{E}_x(G^2) \leq 2 \mathbb{E}_x \left(\int_0^T \|\tilde{D}^{\varepsilon}_sG\|_{\varepsilon}^2ds\right)
\end{equation*}
\end{lemma}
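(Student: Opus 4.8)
The plan is to prove the log-Sobolev inequality for the damped gradient $\tilde{D}^\varepsilon$ via the classical martingale/entropy interpolation argument that Hsu used in the Riemannian case, built directly on top of the Clark--Ocone representation of Proposition \ref{clark-ocone}. The key idea is to interpolate between $\mathbb{E}_x(G^2)$ and $G^2$ along the filtration using the conditional-expectation martingale $N_s = \mathbb{E}_x(G^2 \mid \mathcal{F}_s)$, and to track the relative entropy functional $\Phi(s) = \mathbb{E}_x(N_s \ln N_s)$ as $s$ runs from $0$ to $T$. At $s=0$ we have $\Phi(0) = \mathbb{E}_x(G^2)\ln \mathbb{E}_x(G^2)$, while at $s=T$ we have $\Phi(T) = \mathbb{E}_x(G^2 \ln G^2)$, so the left-hand side of the claimed inequality is exactly $\Phi(T) - \Phi(0)$.

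First I would apply the Clark--Ocone formula of Proposition \ref{clark-ocone} to the functional $G^2$ (or, to be careful about smoothness, to $\phi(G)$ with $\phi(u)=u^2$, then approximating as needed). This writes $N_s$ as a stochastic integral: $N_s = \mathbb{E}_x(G^2) + \int_0^s \langle H_r, \para_{0,r}\, dB_r\rangle$, where $H_r = \mathbb{E}_x\big(\tilde{D}^\varepsilon_r (G^2)\mid \mathcal{F}_r\big)$ is the predictable integrand. In particular $N_s$ is a martingale with $dN_s = \langle H_s, \para_{0,s}\, dB_s\rangle$. Next I would compute $d(N_s \ln N_s)$ by It\^o's formula: since $u\mapsto u\ln u$ has second derivative $1/u$, the drift part is $\frac{1}{2N_s}\, d\langle N\rangle_s = \frac{1}{2N_s}\|H_s\|^2_{\mathcal{H}}\, ds$ (using that $\para_{0,s}$ is an isometry so the quadratic variation of $dB$ contributes $\|H_s\|^2$). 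Taking expectations kills the martingale term and yields
\begin{equation*}
\Phi(T)-\Phi(0) = \frac{1}{2}\,\mathbb{E}_x\!\left(\int_0^T \frac{\|H_s\|^2_{\mathcal{H}}}{N_s}\, ds\right).
\end{equation*}

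The remaining work is to bound the integrand $\|H_s\|^2/N_s$ by something proportional to $\mathbb{E}_x(\|\tilde{D}^\varepsilon_s G\|_\varepsilon^2\mid \mathcal{F}_s)$. The main obstacle — and the only genuinely delicate step — is precisely this pointwise bound. Using the chain rule for the damped gradient, $\tilde{D}^\varepsilon_s(G^2) = 2G\,\tilde{D}^\varepsilon_s G$, so $H_s = 2\,\mathbb{E}_x(G\,\tilde{D}^\varepsilon_s G\mid \mathcal{F}_s)$. Applying the Cauchy--Schwarz inequality for the conditional expectation (with respect to the $g_\varepsilon$-inner product in the fiber and the conditional measure) gives
\begin{equation*}
\|H_s\|^2_{\mathcal{H}} \le 4\,\mathbb{E}_x(G^2\mid \mathcal{F}_s)\cdot \mathbb{E}_x\big(\|\tilde{D}^\varepsilon_s G\|_\varepsilon^2 \mid \mathcal{F}_s\big) = 4\, N_s\, \mathbb{E}_x\big(\|\tilde{D}^\varepsilon_s G\|_\varepsilon^2 \mid \mathcal{F}_s\big).
\end{equation*}
Dividing by $N_s$ cancels the awkward denominator, and substituting back and using the tower property of conditional expectation to remove the conditioning under the outer expectation yields exactly
\begin{equation*}
\Phi(T)-\Phi(0) \le 2\,\mathbb{E}_x\!\left(\int_0^T \|\tilde{D}^\varepsilon_s G\|_\varepsilon^2\, ds\right),
\end{equation*}
which is the claim. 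I expect the only points requiring care beyond this to be (i) justifying the application of Clark--Ocone and It\^o's formula to $G^2$ rather than a linear functional, which I would handle by a standard truncation/approximation of $u\ln u$ away from $0$ (adding a small $\delta>0$ and letting $\delta\to 0$) to avoid integrability issues at $N_s=0$, and (ii) confirming that the quadratic variation computation correctly produces the $g_\varepsilon$-norm $\|\cdot\|_\varepsilon$ rather than $\|\cdot\|_{\mathcal{H}}$, which follows because $\tilde{D}^\varepsilon$ and the isometry structure are set up relative to $g_\varepsilon$.
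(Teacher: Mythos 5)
Your proposal is correct and follows essentially the same route as the paper's proof: the conditional-expectation martingale $N_s=\mathbb{E}_x(G^2\mid\mathcal{F}_s)$, It\^o's formula applied to $N_s\ln N_s$, the Clark--Ocone representation (with the chain rule $\tilde{D}^\varepsilon_s(G^2)=2G\,\tilde{D}^\varepsilon_s G$ giving the integrand $2\,\mathbb{E}_x(G\,\tilde{D}^\varepsilon_s G\mid\mathcal{F}_s)$), and the conditional Cauchy--Schwarz inequality to cancel the $N_s$ in the denominator. Your additional remarks on regularizing $u\ln u$ and justifying the application of Clark--Ocone to $G^2$ address technical points the paper leaves implicit, but the argument itself is the same.
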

\begin{proof}
Let us consider the martingale $N_s= \mathbb{E}( G^2 | \mathcal{F}_s)$. Applying It\^o's formula to $N_s \ln N_s$ and taking expectation yields
 \[
 \mathbb{E}_x( N_t \ln N_t)-\mathbb{E}_x( N_0 \ln N_0)=\frac{1}{2} \mathbb{E}_x\left( \int_0^t \frac{d[N]_s}{N_s} \right),
 \]
 where $[N]$ is the quadratic variation of $N$. From Proposition \ref{clark-ocone}, we have
 \[
dN_s=2 \left\langle \mathbb{E} \left( G \tilde{D}^{\varepsilon}_sG  \mid \mathcal{F}_s \right), \para _{0,s} dB_s \right \rangle.
 \]
 Thus we have from Cauchy-Schwarz inequality
  \begin{align*}
 \mathbb{E}_x( N_t \ln N_t)-\mathbb{E}_x( N_0 \ln N_0) & \le 2 \mathbb{E}_x\left( \int_0^t \frac{\|\mathbb{E} \left( G \tilde{D}^{\varepsilon}_sG \mid \mathcal{F}_s \right)\|_{\varepsilon}^2}{N_s} ds \right) \\
  & \le 2 \mathbb{E}_x \left(\int_0^T \|\tilde{D}^{\varepsilon}_sG\|_{\varepsilon}^2ds\right).
 \end{align*}
\end{proof}

The second step is the following lemma.

\begin{lemma}\label{damped gradient bounded by summation}
With  $G=f(X_{t_1},X_{t_2},\cdots,X_{t_n})$ we have the following inequality
\begin{equation*}
\mathbb{E}_x \left(\int_0^T\|\tilde{D}^{\varepsilon}_sG\|_{\varepsilon}^2ds\right) \leq e^{T(K+\frac{\kappa}{\varepsilon})}\sum_{l=1}^n \frac{t_l-t_{l-1}}{T} \|\sum_{i=l}^n(\tau_{t_l}^{\varepsilon})^{-1}\tau_{t_i}^{\varepsilon}d_if(X_{t_1},\cdots,X_{t_n})\|_{\varepsilon}^2
\end{equation*}
\end{lemma}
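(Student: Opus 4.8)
The plan is to establish the bound pointwise in the path $\omega$ and then take $\mathbb{E}_x$. Abbreviate $d_if=d_if(X_{t_1},\dots,X_{t_n})$ and set
\[
V_l:=\sum_{i=l}^n(\tau_{t_l}^{\varepsilon})^{-1}\tau_{t_i}^{\varepsilon}d_if ,
\]
the vector whose norm appears on the right-hand side. Fix $l$ and $s\in(t_{l-1},t_l]$. For such $s$ the indicator $\mathbf{1}_{[0,t_i]}(s)$ equals $1$ exactly when $i\ge l$, so, inserting $(\tau_s^{\varepsilon})^{-1}\tau_{t_i}^{\varepsilon}=(\tau_s^{\varepsilon})^{-1}\tau_{t_l}^{\varepsilon}(\tau_{t_l}^{\varepsilon})^{-1}\tau_{t_i}^{\varepsilon}$ in the definition of the damped gradient,
\[
\tilde D_s^{\varepsilon}G=\sum_{i=l}^n(\tau_s^{\varepsilon})^{-1}\tau_{t_i}^{\varepsilon}d_if=(\tau_s^{\varepsilon})^{-1}\tau_{t_l}^{\varepsilon}\,V_l .
\]
Everything therefore reduces to controlling the $g_{\varepsilon}$-operator norm of the two-time propagator $(\tau_s^{\varepsilon})^{-1}\tau_t^{\varepsilon}$ for $s\le t$.

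The main work is the uniform estimate
\[
\big\|(\tau_s^{\varepsilon})^{-1}\tau_t^{\varepsilon}\big\|_{\varepsilon}\le e^{\frac12(K+\frac{\kappa}{\varepsilon})(t-s)},\qquad 0\le s\le t,
\]
the two-parameter version of the bound $\|\tau_t^{\varepsilon}\|_{\varepsilon}\le e^{\frac12(K+\frac{\kappa}{\varepsilon})t}$ already recorded in the preliminaries. I would prove it from the factorization $\tau_t^{\varepsilon}=\mathcal{M}_t^{\varepsilon}\Theta_t^{\varepsilon}$ of \eqref{tau=M Theta}. Writing $P_{s,t}:=(\mathcal{M}_s^{\varepsilon})^{-1}\mathcal{M}_t^{\varepsilon}$ one has $(\tau_s^{\varepsilon})^{-1}\tau_t^{\varepsilon}=(\Theta_s^{\varepsilon})^{-1}P_{s,t}\,\Theta_t^{\varepsilon}$, and since each $\Theta^{\varepsilon}$ is a $g_{\varepsilon}$-isometry it suffices to bound $\|P_{s,t}\|_{\varepsilon}$. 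Differentiating \eqref{multiplicative function M_t} in the base variable $s$ (with $t$ fixed) shows that $w(s):=P_{s,t}v$ solves
\[
\partial_s w(s)=\tfrac12 A_s\,w(s),\qquad A_s:=\Theta_s^{\varepsilon}\Big(\tfrac1{\varepsilon}\mathbf{J}^2+\mathfrak{Ric}_{\mathcal{H}}\Big)(\Theta_s^{\varepsilon})^{-1},\qquad w(t)=v .
\]
Hence $\frac{d}{ds}\|w\|_{\varepsilon}^2=\langle A_s w,w\rangle_{\varepsilon}=\langle(\tfrac1{\varepsilon}\mathbf{J}^2+\mathfrak{Ric}_{\mathcal{H}})\eta,\eta\rangle_{\varepsilon}$, where $\eta:=(\Theta_s^{\varepsilon})^{-1}w$ has $\|\eta\|_{\varepsilon}=\|w\|_{\varepsilon}$ (using $(\Theta_s^{\varepsilon})^{*}=(\Theta_s^{\varepsilon})^{-1}$ and the $g_{\varepsilon}$-symmetry of $\mathbf{J}^2$ and $\mathfrak{Ric}_{\mathcal{H}}$). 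By \eqref{geometric assumption} together with $\mathbf{J}^{*}\mathbf{J}=-\mathbf{J}^2$ this quadratic form is bounded below by $-(K+\frac{\kappa}{\varepsilon})\|w\|_{\varepsilon}^2$, so $\frac{d}{ds}\|w\|_{\varepsilon}^2\ge-(K+\frac{\kappa}{\varepsilon})\|w\|_{\varepsilon}^2$; integrating from $s$ up to $t$ and using $w(t)=v$ gives $\|P_{s,t}v\|_{\varepsilon}^2\le e^{(K+\frac{\kappa}{\varepsilon})(t-s)}\|v\|_{\varepsilon}^2$, i.e.\ the desired operator-norm bound. This Grönwall step, and the observation that $A_s$ is $g_{\varepsilon}$-symmetric so that the lower curvature bound is available, is the only real obstacle; everything else is routine.

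Finally I would assemble the pieces. On $(t_{l-1},t_l]$ the two reductions above give
\[
\|\tilde D_s^{\varepsilon}G\|_{\varepsilon}\le \big\|(\tau_s^{\varepsilon})^{-1}\tau_{t_l}^{\varepsilon}\big\|_{\varepsilon}\,\|V_l\|_{\varepsilon}\le e^{\frac12(K+\frac{\kappa}{\varepsilon})(t_l-s)}\|V_l\|_{\varepsilon}\le e^{\frac12 T(K+\frac{\kappa}{\varepsilon})}\|V_l\|_{\varepsilon},
\]
since $t_l-s\le t_l-t_{l-1}\le T$. Squaring, integrating over $s\in(t_{l-1},t_l]$ (which contributes the length $t_l-t_{l-1}$), summing over $l$, and noting that $\tilde D_s^{\varepsilon}G=0$ for $s>t_n$, one obtains the pointwise bound
\[
\int_0^T\|\tilde D_s^{\varepsilon}G\|_{\varepsilon}^2\,ds\le e^{T(K+\frac{\kappa}{\varepsilon})}\sum_{l=1}^n (t_l-t_{l-1})\,\|V_l\|_{\varepsilon}^2 .
\]
Taking $\mathbb{E}_x$ of both sides then yields the stated inequality, with the weights $t_l-t_{l-1}$ produced by the subinterval integration.
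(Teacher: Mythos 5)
Your proof is correct and follows essentially the same route as the paper: the same reduction that on $(t_{l-1},t_l]$ one has $\tilde D_s^{\varepsilon}G=(\tau_s^{\varepsilon})^{-1}\tau_{t_l}^{\varepsilon}V_l$ with $V_l=\sum_{i=l}^n(\tau_{t_l}^{\varepsilon})^{-1}\tau_{t_i}^{\varepsilon}d_if$, the same two-time propagator estimate, and integration over the subintervals. Two remarks. First, where the paper merely asserts the bound $\|(\tau_s^{\varepsilon})^{-1}\tau_{t_l}^{\varepsilon}\|_{\varepsilon}\le e^{\frac12(K+\kappa/\varepsilon)(t_l-s)}$ (``by our definition of $\tau_t^{\varepsilon}$ and the assumption''), you actually prove it, via the factorization $\tau^{\varepsilon}=\mathcal{M}^{\varepsilon}\Theta^{\varepsilon}$, the isometry property of $\Theta^{\varepsilon}$, and a Gr\"onwall argument for $(\mathcal{M}_s^{\varepsilon})^{-1}\mathcal{M}_t^{\varepsilon}$; this is exactly the intended justification and a worthwhile addition (your pathwise formulation, with $\mathbb{E}_x$ taken only at the end, is also the natural reading, since the right-hand side of the statement is random). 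Second, your conclusion carries the weights $t_l-t_{l-1}$ rather than $\frac{t_l-t_{l-1}}{T}$ as displayed in the lemma. This is not a gap in your argument: writing $c=K+\kappa/\varepsilon$, the paper's own computation
\[
\int_{t_{l-1}}^{t_l}e^{c(t_l-s)}\,ds=\frac{e^{c(t_l-t_{l-1})}-1}{c}\le (t_l-t_{l-1})\,e^{cT}
\]
(its quoted elementary inequality applied with $s=(t_l-t_{l-1})/T$ and $cT$ in place of $c$) produces exactly your weights. The extra $1/T$ in the displayed statement is a normalization slip inherited from Hsu's setting $T=1$: as literally written the inequality is dimensionally inconsistent (the left side scales like time, the right side does not) and fails for large $T$, so your version is the correct one, and it is the one the paper's own proof establishes.
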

\begin{proof}
From definition  \ref{gradient definition}, we have
\begin{equation*}
\tilde{D}_s^{\varepsilon}G=\sum_{i=1}^{n}\sum_{l=1}^i\mathbf{1}_{[t_{l-1},t_l]}(s)(\tau_s^{\varepsilon})^{-1}\tau_{t_i}^{\varepsilon}d_if(X_{t_1},\cdots,X_{t_n})=\sum_{l=1}^n\sum_{i=l}^n\mathbf{1}_{[t_{l-1},t_l]}(s)(\tau_s^{\varepsilon})^{-1}\tau_{t_i}^{\varepsilon}d_if(X_{t_1},X_{t_2},\cdots,X_{t_n}).
\end{equation*}
Thus we have 
\begin{equation*}
\mathbb{E}_x \left( \int_0^T \|\tilde{D}^{\varepsilon}_sG\|_{\varepsilon}^2ds\right)=\sum_{l=1}^n\int_{t_{l-1}}^{t_l}\|\sum_{i=l}^n(\tau_s^{\varepsilon})^{-1}\tau_{t_l}^{\varepsilon}(\tau_{t_l}^{\varepsilon})^{-1}\tau_{t_i}^{\varepsilon}d_if(X_{t_1},X_{t_2},\cdots,X_{t_n})\|_{\varepsilon}^2ds
\end{equation*}
By our definition of $\tau_t^{\varepsilon}$ and the assumption \ref{geometric assumption}, we have
\begin{equation*}
\|(\tau_s^{\varepsilon})^{-1}\tau_{t_l}^{\varepsilon}\sum_{i=l}^n(\tau_{t_l}^{\varepsilon})^{-1}\tau_{t_i}^{\varepsilon}d_if(X_{t_1},X_{t_2},\cdots,X_{t_n})\|_{\varepsilon}^2\leq e^{(K+\frac{\kappa}{\varepsilon})(t_l-s)}\|\sum_{i=l}^n(\tau_{t_l}^{\varepsilon})^{-1}\tau_{t_i}^{\varepsilon}d_if(X_{t_1},X_{t_2},\cdots,X_{t_n})\|_{\varepsilon}^2.
\end{equation*}
Hence
\begin{equation*}
 \mathbb{E}_x \left( \int_0^T \|\tilde{D}^{\varepsilon}_sG\|_{\varepsilon}^2ds\right) \leq \sum_{l=1}^n \int_{t_{l-1}}^{t_l}e^{(K+\frac{\kappa}{\varepsilon})(t_l-s)}ds\|\sum_{i=l}^n(\tau_{t_l}^{\varepsilon})^{-1}\tau_{t_i}^{\varepsilon}d_if(X_{t_1},X_{t_2},\cdots,X_{t_n})\|_{\varepsilon}^2.
\end{equation*}
We now use the elementary  inequality $\frac{e^{sc}-1}{c}\leq se^c$ to get :
\begin{equation*}
 \mathbb{E}_x \left( \int_0^T \|\tilde{D}^{\varepsilon}_sG\|_{\varepsilon}^2ds\right) \leq e^{T(K+\frac{\kappa}{\varepsilon})}\sum_{l=1}^n \frac{t_l-t_{l-1}}{T} \|\sum_{i=l}^n(\tau_{t_l}^{\varepsilon})^{-1}\tau_{t_i}^{\varepsilon}d_if(X_{t_1},X_{t_2},\cdots,X_{t_n})\|_{\varepsilon}^2.
\end{equation*}
\end{proof}

The third and final step is the following bound.

\begin{lemma}
With  $G=f(X_{t_1},X_{t_2},\cdots,X_{t_n})$, we have
\begin{equation*}
\sum_{l=1}^n \frac{t_l-t_{l-1}}{T}\|\sum_{i=l}^n(\tau_{t_l}^{\varepsilon})^{-1}\tau_{t_i}^{\varepsilon}d_if(X_{t_1},\cdots,X_{t_n})\|_{\varepsilon}^2\leq e^{2T(K+\frac{\kappa}{\varepsilon})}
 \mathbb{E}_x \left( \int_0^T\|D^\varepsilon_sG\|_{\varepsilon}^2ds\right)
\end{equation*}
\end{lemma}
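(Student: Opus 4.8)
The plan is to deduce the statement from a pointwise (in $\omega$) inequality, by producing an \emph{exact} integral representation of the damped blocks on the left in terms of the intrinsic gradient $D^\varepsilon_\cdot G$ occurring on the right, and then reading the resulting estimate as an $L^2([0,T])$ bound for a Volterra operator. First I would put the right-hand side in a usable form. Since $D^\varepsilon_sG=\sum_{i=1}^n\mathbf 1_{[0,t_i]}(s)\,\Theta^\varepsilon_{t_i}d_if$ is piecewise constant in $s$, equal to $B_l:=\sum_{i=l}^n\Theta^\varepsilon_{t_i}d_if$ on each $(t_{l-1},t_l)$ and to $0$ on $(t_n,T]$, one gets $\int_0^T\|D^\varepsilon_sG\|_\varepsilon^2\,ds=\sum_{l=1}^n(t_l-t_{l-1})\|B_l\|_\varepsilon^2$. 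Writing $A_l:=\sum_{i=l}^n(\tau^\varepsilon_{t_l})^{-1}\tau^\varepsilon_{t_i}d_if$ for the $l$-th block on the left, it then suffices to prove the pointwise inequality $\sum_l(t_l-t_{l-1})\|A_l\|_\varepsilon^2\le e^{2T(K+\kappa/\varepsilon)}\sum_l(t_l-t_{l-1})\|B_l\|_\varepsilon^2$ and take $\mathbb E_x$; the $(t_l-t_{l-1})$ weights are exactly those produced by Lemma \ref{damped gradient bounded by summation}, so the two estimates chain directly.

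The mechanism is the factorization $\tau^\varepsilon_t=\mathcal M^\varepsilon_t\Theta^\varepsilon_t$ together with the fact that $\Theta^\varepsilon_t$ is a $g_\varepsilon$-isometry. Setting $v_i:=\Theta^\varepsilon_{t_i}d_if$ and using $(\tau^\varepsilon_{t_l})^{-1}\tau^\varepsilon_{t_i}=(\Theta^\varepsilon_{t_l})^{-1}(\mathcal M^\varepsilon_{t_l})^{-1}\mathcal M^\varepsilon_{t_i}\Theta^\varepsilon_{t_i}$, one finds $\|A_l\|_\varepsilon=\big\|(\mathcal M^\varepsilon_{t_l})^{-1}\sum_{i\ge l}\mathcal M^\varepsilon_{t_i}v_i\big\|_\varepsilon$. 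The only real difficulty is that $\mathcal M^\varepsilon_{t_i}$ depends on $i$ and cannot be factored out of the sum, so a term-by-term comparison of $A_l$ with $B_l$ is genuinely false (a block with small $\|B_l\|_\varepsilon$ but large summands forces $\|A_l\|_\varepsilon$ to be controlled by a \emph{neighbouring} index). I would dissolve this obstruction by writing $\mathcal M^\varepsilon_{t_i}=\mathcal M^\varepsilon_{t_l}+\int_{t_l}^{t_i}\tfrac{d}{dr}\mathcal M^\varepsilon_r\,dr$ and exchanging the (finite) sum with the integral; because $\sum_{i:\,t_i>r}v_i=D^\varepsilon_rG$, this gives the exact identity
\[
(\mathcal M^\varepsilon_{t_l})^{-1}\sum_{i\ge l}\mathcal M^\varepsilon_{t_i}v_i=B_l+\int_{t_l}^{t_n}(\mathcal M^\varepsilon_{t_l})^{-1}\tfrac{d}{dr}\mathcal M^\varepsilon_r\,D^\varepsilon_rG\,dr .
\]

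From the equation \eqref{multiplicative function M_t} for $\mathcal M^\varepsilon$, the \emph{two-sided} bounds \eqref{geometric assumption} (which, as in the derivation of the pointwise bound on $\|\tau^\varepsilon_t\|_\varepsilon$ in Section 2, make $\tfrac1\varepsilon\mathbf J^2+\mathfrak{Ric}_{\mathcal H}$ a $g_\varepsilon$-symmetric operator of norm $\le c:=K+\kappa/\varepsilon$) yield $\|(\mathcal M^\varepsilon_{t_l})^{-1}\mathcal M^\varepsilon_r\|_\varepsilon\le e^{\frac c2(r-t_l)}$ and hence $\|(\mathcal M^\varepsilon_{t_l})^{-1}\tfrac{d}{dr}\mathcal M^\varepsilon_r\|_\varepsilon\le\tfrac c2 e^{\frac c2(r-t_l)}$. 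Combined with $t_n\le T$ this gives the clean pointwise bound
\[
\|A_l\|_\varepsilon\le\|B_l\|_\varepsilon+\frac c2\int_{t_l}^{T}e^{\frac c2(r-t_l)}\,\|D^\varepsilon_rG\|_\varepsilon\,dr .
\]

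Finally I would convert this into an $L^2$ estimate. With $\phi(s):=\|D^\varepsilon_sG\|_\varepsilon$ (piecewise constant, $=\|B_l\|_\varepsilon$ on $(t_{l-1},t_l)$) and the Volterra operator $(V\phi)(t)=\tfrac c2\int_t^Te^{\frac c2(s-t)}\phi(s)\,ds$, the last display shows $\|A_l\|_\varepsilon\le\psi(t):=\phi(t)+(V\phi)(t)$ for every $t\in(t_{l-1},t_l)$, since lowering the lower limit from $t_l$ to $t$ only enlarges the weighted integral. Therefore $\sum_l(t_l-t_{l-1})\|A_l\|_\varepsilon^2\le\int_0^{t_n}\psi^2\le\|I+V\|_{L^2\to L^2}^2\int_0^T\phi^2$. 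A Schur test gives $\|V\|_{L^2\to L^2}\le e^{\frac c2 T}-1$ (both $\sup_t\int_t^T$ and $\sup_s\int_0^s$ of the kernel equal $e^{\frac c2 T}-1$), hence $\|I+V\|_{L^2\to L^2}\le e^{\frac c2 T}$ and $\int_0^T\psi^2\le e^{cT}\int_0^T\phi^2$. This proves the claim with the sharper constant $e^{T(K+\kappa/\varepsilon)}$, and a fortiori the stated $e^{2T(K+\kappa/\varepsilon)}$. The only points needing care are the operator-norm/self-adjointness bound for $\tfrac1\varepsilon\mathbf J^2+\mathfrak{Ric}_{\mathcal H}$ under \eqref{geometric assumption} and the interchange of the finite sum with the integral, both of which are routine.
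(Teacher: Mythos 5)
Your proof is correct, and its engine is the same as the paper's: the factorization $\tau^\varepsilon_t=\mathcal M^\varepsilon_t\Theta^\varepsilon_t$ from \eqref{tau=M Theta}, the fact that $\Theta^\varepsilon$ is a $g_\varepsilon$-isometry, and the bound $\|(\mathcal M^\varepsilon_{t_l})^{-1}\tfrac{d}{dr}\mathcal M^\varepsilon_r\|_\varepsilon\le\tfrac c2e^{\frac c2(r-t_l)}$, $c=K+\tfrac{\kappa}{\varepsilon}$, coming from \eqref{multiplicative function M_t} and \eqref{geometric assumption}. Where you part ways is in how the perturbation of $z_l=B_l=\sum_{i\ge l}\Theta^\varepsilon_{t_i}d_if$ is organized and summed. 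The paper performs a discrete Abel summation, $A_l=z_l+\sum_{i>l}[(\mathcal M^\varepsilon_{t_l})^{-1}\mathcal M^\varepsilon_{t_i}-(\mathcal M^\varepsilon_{t_l})^{-1}\mathcal M^\varepsilon_{t_{i-1}}]z_i$, bounds each increment by integrating the ODE for $\mathcal M^\varepsilon$, applies Young's inequality with the ad hoc parameter $\lambda=\tfrac c2 e^{c/2}$, and then outsources the final summation step to ``the argument in Lemma 4.3 in \cite{Hsu}'', landing on the constant $e^{2Tc}$. You instead keep the perturbation as an exact integral --- the continuous form of the same summation by parts --- which makes $D^\varepsilon_rG$ appear inside the integral, and you close the estimate with a Schur test for the Volterra operator $I+V$. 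This buys two things: the proof is self-contained (no appeal to Hsu's lemma, whose adaptation from $[0,1]$ to $[0,T]$ is exactly the part the paper leaves implicit), and the constant improves to $e^{Tc}$, which would propagate to $2e^{2Tc}$ in Theorem \ref{log-Sobolev inequality}. Your side remark that the two-sided bound in \eqref{geometric assumption} turns the quadratic-form bounds into an operator-norm bound for $\tfrac1\varepsilon\mathbf J^2+\mathfrak{Ric}_{\mathcal H}$ relies on the symmetry of that operator, but this is the same implicit assumption the paper itself uses both here and in Section 2.

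One point you handled silently but correctly: what you actually prove is the pathwise inequality with weights $t_l-t_{l-1}$, not the displayed one with weights $\tfrac{t_l-t_{l-1}}{T}$ and $\mathbb E_x$ on the right. The display cannot be read literally anyway: its left-hand side is random and its right-hand side deterministic, and for $n=1$, $t_1=T$ small it fails (the left side tends to $\|df(x)\|^2_\varepsilon$ while the right side tends to $0$). The paper's own proof has the same feature --- it ends with a pathwise bound against $\int_0^Tg_s^2\,ds$ and then ``identifies'' that integral with $\mathbb E_x\left(\int_0^T\|D^\varepsilon_sG\|^2_\varepsilon\,ds\right)$, an abuse of notation --- and the spurious $1/T$ cancels against the matching one in Lemma \ref{damped gradient bounded by summation} when the two lemmas are chained. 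So your version is precisely the statement Theorem \ref{log-Sobolev inequality} needs, with a sharper constant.
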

\begin{proof}
From Definition \ref{gradient definition}, we have
\[
D^\varepsilon_tF=\sum_{i=1}^n\mathbf{1}_{0,t_i}(t)\Theta_{t_i}^{\varepsilon}d_if(X_{t_1},\cdots,X_{t_n})
\]
Also recall from equation (\ref{tau=M Theta}) that $\tau^{\varepsilon}_t=\mathcal{M}_{t}^{\varepsilon}\Theta_{t}^{\varepsilon}$.
Let us denote $z_l=\sum_{i=l}^n\Theta_{t_i}^{\varepsilon}d_if(X_{t_1},\cdots,X_{t_n})$. 
we then have 
\begin{align*}
 & \|\sum_{i=l}^n(\tau_{t_l}^{\varepsilon})^{-1}\tau_{t_i}^{\varepsilon}d_if(X_{t_1},\cdots,X_{t_n})\|_{\varepsilon}^2 \\
=& \|(\Theta_{t_l}^{\varepsilon})^{-1}\sum_{i=l}^n(\mathcal{M}_{t_l}^{\varepsilon})^{-1}\mathcal{M}_{t_i}^{\varepsilon}\Theta_{t_i}^{\varepsilon} f(X_{t_1},\cdots,X_{t_n})\|_{\varepsilon}^2\\
=& \left\|(\Theta_{t_l}^{\varepsilon})^{-1}\left( z_l+\sum_{i=l+1}^n[(\mathcal{M}_{t_l}^{\varepsilon})^{-1}\mathcal{M}_{t_i}^{\varepsilon}-(\mathcal{M}_{t_l}^{\varepsilon})^{-1}\mathcal{M}_{t_{i-1}}^{\varepsilon}]z_i\right) \right\|_{\varepsilon}^2 \\
=&\left\| z_l+\sum_{i=l+1}^n[(\mathcal{M}_{t_l}^{\varepsilon})^{-1}\mathcal{M}_{t_i}^{\varepsilon}-(\mathcal{M}_{t_l}^{\varepsilon})^{-1}\mathcal{M}_{t_{i-1}}^{\varepsilon}]z_i\right\|_{\varepsilon}^2
\end{align*}
From equation (\ref{multiplicative function M_t}) and  (\ref{geometric assumption}) we have
\begin{equation*}
\|[(\mathcal{M}_{t_l}^{\varepsilon})^{-1}\mathcal{M}_{t_i}^{\varepsilon}-(\mathcal{M}_{t_l}^{\varepsilon})^{-1}\mathcal{M}_{t_{i-1}}^{\varepsilon}]z_i\|_{\varepsilon}^2\leq \left(\frac{K+\frac{\kappa}{\varepsilon}}{2}\int_{t_{i-1}}^{t_i}e^{\frac{1}{2}(K+\frac{\kappa}{\varepsilon})(s-t_l)}ds\right)^2\|z_i\|_{\varepsilon}^2
\end{equation*}
Thus by Cauchy-Schwarz inequality, with $c=(K+\frac{\kappa}{\varepsilon})$ and $\lambda=\frac{c}{2}e^{\frac{c}{2}}$
\begin{equation*}
\|\sum_{i=l}^n(\tau_{t_l}^{\varepsilon})^{-1}\tau_{t_i}^{\varepsilon}d_if(X_{t_1},\cdots,X_{t_n})\|_{\varepsilon}^2\leq (1+\lambda)\|z_l\|_{\varepsilon}^2+(1+\frac{1}{\lambda})\frac{c^2}{4} \|\int_{t_{l}}^{T}e^{\frac{1}{2}c(s-t_l)}g_sds\|_{\varepsilon}^2
\end{equation*}
where $g_s=\|z_l\|_{\varepsilon}$ for $s\in [t_{l-1},t_l)$.  We easily deduce from that (see the argument in  Lemma 4.3 in \cite{Hsu} for more details), 
\begin{equation*}
\sum_{l=1}^n\frac{t_l-t_{l-1}}{T}\|\sum_{i=l}^n(\tau_{t_l}^{\varepsilon})^{-1}\tau_{t_i}^{\varepsilon}d_if(X_{t_1},\cdots,X_{t_n})\|_{\varepsilon}^2\leq  e^{2T(K+\frac{\kappa}{\varepsilon})}\int_0^Tg_s^2ds.
\end{equation*}
We then complete the proof by observing that
\begin{equation*}
\int_0^Tg_s^2ds=\mathbb{E}_x \left(\int_0^T\|D^\varepsilon_sG\|_{\varepsilon}^2ds\right).
\end{equation*}
\end{proof}

%%%%%%%%%%%%%%%%%%%%%%%%%%%%%%%%%%%%%%%%%%%%%%%%%%%%%
\section{Concentration inequalities}

In this section, we study concentration inequalities for the horizontal Brownian motion $(X_t)_{t \ge 0}$ which is started at a fixed point $x \in \M$. As it is well-known, see \cite{Led}, concentration inequalities are closely related to log-Sobolev inequalities.

We assume throughout the section that for every horizontal one-form $\eta\in \Gamma^{\infty}(\mathcal{H}^*)$,
\begin{equation}\label{bounds2}
\langle \mathfrak{Ric}_{\mathcal{H}}\eta,\eta\rangle_{\mathcal{H}}\geq -K\|\eta\|^2_{\mathcal{H}}, ~~~-\langle\mathbf{J}^2(\eta),\eta\rangle_{\mathcal{H}}\leq \kappa \|\eta\|_{\mathcal{H}}^2,
\end{equation}
with $K \ge 0,\kappa >0$.

For $\varepsilon >0$, we denote by $d_\varepsilon$ the distance associated with the Riemannian metric $g_\varepsilon$. To get a concentration bound for the distance $d_\varepsilon$, we adapt an argument from Ledoux \cite{Led} (see also  \cite{Hou}). 

\

The following lemma is a consequence of Herbst argument (see \cite{Led} page 148) applied to the log-Sobolev inequality in Lemma \ref{log sob damped}.

\begin{lemma}
Let $\varepsilon >0$. Let $F \in \mathbf{Dom} ( \tilde{D}^\varepsilon )$. If there is a constant $C>0$ such that $$\int_0^T \| \tilde{D}_s^\varepsilon F \|^2 ds < C,$$ almost surely, then for every $r \ge 0$,
\[
\mathbb{P}_x \left( F -\mathbb{E}_x (F) \ge r \right) \le \exp \left( - \frac{r^2}{2 \sigma^2} \right),
\]
where $\sigma^2=\mathbb{E}_x \left(\int_0^T \| \tilde{D}_s^\varepsilon F \|^2 ds \right)$.
\end{lemma}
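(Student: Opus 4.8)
The plan is to run the classical Herbst argument (Ledoux, page 148) on top of the \emph{curvature-free} damped log-Sobolev inequality of Lemma \ref{log sob damped}, whose right-hand side carries only the factor $2$ and no exponential prefactor. Fix $\lambda>0$ and set $G=e^{\lambda F/2}$, so that $G^2=e^{\lambda F}$. The first step is a chain rule: since $\tilde D^{\varepsilon}$ acts as a derivation on cylinder functionals and extends to $\mathbf{Dom}(\tilde D^{\varepsilon})$ by the closability noted after the Corollary in Section 3, I would justify that $\tilde D^{\varepsilon}_s G=\tfrac{\lambda}{2}\,e^{\lambda F/2}\,\tilde D^{\varepsilon}_s F$, hence $\|\tilde D^{\varepsilon}_s G\|^2=\tfrac{\lambda^2}{4}\,e^{\lambda F}\,\|\tilde D^{\varepsilon}_s F\|^2$. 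Invoking the almost-sure hypothesis $\int_0^T\|\tilde D^{\varepsilon}_s F\|^2\,ds<C$ then gives the pointwise control $\int_0^T\|\tilde D^{\varepsilon}_s G\|^2\,ds\le \tfrac{C\lambda^2}{4}\,G^2$.

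Next I would turn Lemma \ref{log sob damped} into a differential inequality for the Laplace transform. Write $H(\lambda)=\mathbb{E}_x(e^{\lambda F})$, and note $\mathbb{E}_x(G^2\ln G^2)=\lambda H'(\lambda)$ and $\mathbb{E}_x(G^2)\ln\mathbb{E}_x(G^2)=H(\lambda)\ln H(\lambda)$. Plugging the gradient bound into the log-Sobolev inequality yields $\lambda H'(\lambda)-H(\lambda)\ln H(\lambda)\le \tfrac{C\lambda^2}{2}H(\lambda)$. Dividing by $\lambda^2 H(\lambda)$ and setting $\Lambda(\lambda)=\tfrac{1}{\lambda}\ln H(\lambda)$ gives $\Lambda'(\lambda)\le C/2$, with boundary value $\Lambda(0^+)=\mathbb{E}_x(F)$. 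Integrating from $0$ to $\lambda$ produces the sub-Gaussian Laplace bound $\ln H(\lambda)\le \lambda\,\mathbb{E}_x(F)+\tfrac{C\lambda^2}{2}$, i.e. $\mathbb{E}_x\!\big(e^{\lambda(F-\mathbb{E}_x F)}\big)\le e^{C\lambda^2/2}$.

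The final step is the exponential Chebyshev inequality: for $r\ge 0$, $\mathbb{P}_x(F-\mathbb{E}_x F\ge r)\le e^{-\lambda r}\,\mathbb{E}_x(e^{\lambda(F-\mathbb{E}_x F)})\le e^{-\lambda r+C\lambda^2/2}$, and minimizing over $\lambda>0$ at $\lambda=r/C$ gives the Gaussian tail $\exp(-r^2/(2C))$.

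I expect two points to require care. The minor technical obstacle is that $e^{\lambda F/2}$ is not a smooth compactly supported cylinder function (if $f\in C_0^\infty(\mathbb{M}^n)$ then $e^{\lambda f/2}$ equals $1$ off a compact set), so the log-Sobolev inequality must first be extended from $\mathcal C$ to $\mathbf{Dom}(\tilde D^{\varepsilon})$ by a truncation/density argument, and one must verify a priori finiteness and differentiability of $H(\lambda)$, both of which follow from the uniform gradient bound $C$. The genuinely delicate point, however, is the discrepancy between the constant that the argument naturally produces and the one in the statement: the above yields the exponent $r^2/(2C)$ with the \emph{almost-sure} bound $C$, whereas the claim is stated with $\sigma^2=\mathbb{E}_x(\int_0^T\|\tilde D^{\varepsilon}_s F\|^2\,ds)$. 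Since $\sigma^2\le C$, the $\sigma^2$-bound is the sharper one; it coincides with the $C$-bound precisely when $\int_0^T\|\tilde D^{\varepsilon}_s F\|^2\,ds$ is almost surely constant (the typical situation for the Lipschitz distance functionals used in the concentration application), and in general the passage from $C$ to $\sigma^2$ is where the entropy term $\mathbb{E}_x(e^{\lambda F}\int_0^T\|\tilde D^{\varepsilon}_s F\|^2\,ds)$ cannot simply be factored as $\sigma^2 H(\lambda)$. This is the step I would scrutinize most closely.
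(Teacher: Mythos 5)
Your proof is correct and is exactly the paper's own argument: the paper gives no details beyond the one-line citation of Herbst's argument (Ledoux, p.~148) applied to Lemma \ref{log sob damped}, which is precisely the chain you carry out ($G=e^{\lambda F/2}$, chain rule for $\tilde D^\varepsilon$, differential inequality for $\frac{1}{\lambda}\ln\mathbb{E}_x(e^{\lambda F})$, exponential Chebyshev). Moreover, the caveat you flag at the end is a genuine defect of the \emph{statement}, not of your argument: the Herbst step requires factoring $\mathbb{E}_x\bigl(e^{\lambda F}\int_0^T\|\tilde D^\varepsilon_s F\|^2ds\bigr)\le \bigl\|\int_0^T\|\tilde D^\varepsilon_s F\|^2ds\bigr\|_\infty\,\mathbb{E}_x(e^{\lambda F})$, so the exponent one obtains is $r^2/(2C)$ (essential supremum), and the version with $\sigma^2=\mathbb{E}_x\bigl(\int_0^T\|\tilde D^\varepsilon_s F\|^2ds\bigr)$ is in fact false in general: already on flat Wiener space, a functional which (up to smoothing) equals $\mathbf{1}_{\{B_\delta>0\}}(B_T-B_\delta)$ is conditionally either $0$ or Gaussian of variance $\approx T$, satisfies the almost-sure hypothesis with $C\approx T$ and has $\sigma^2\approx T/2$, yet its upper tail behaves like $\frac{1}{2}e^{-r^2/(2T)}$, which exceeds $e^{-r^2/(2\sigma^2)}=e^{-r^2/T}$ for large $r$. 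Note that nothing downstream in the paper is affected: in the only application (the next proposition) the gradient bound is deterministic, $\|\tilde D^\varepsilon_s F\|_\varepsilon\le e^{\frac{1}{2}(K+\frac{\kappa}{\varepsilon})T}$, and the constant actually inserted in the conclusion there is the almost-sure bound $Te^{(K+\frac{\kappa}{\varepsilon})T}$, i.e.\ exactly what your argument delivers.
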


The previous lemma implies the following concentration inequality.

\begin{proposition}
Let $\varepsilon >0$. We have for every $T>0$ and $r  \ge 0$
\begin{equation}
\mathbb{P}_x \left(\sup_{0\leq t\leq T}d_{\varepsilon}(X_t,x)\geq \mathbb{E}_x\left[\sup_{0\leq t\leq T}d_{\varepsilon}(X_t,x)\right]+r\right)\leq \exp\left(-\frac{r^2}{2Te^{\left(K+\frac{\kappa}{\varepsilon}\right)T}}\right).
\end{equation}
\end{proposition}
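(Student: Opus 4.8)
The plan is to apply the preceding Herbst-type lemma to the functional
\[
F=\sup_{0\le t\le T} d_\varepsilon(X_t,x),
\]
so that the whole statement reduces to the almost sure estimate $\int_0^T\|\tilde D_s^\varepsilon F\|_\varepsilon^2\,ds\le T e^{(K+\frac{\kappa}{\varepsilon})T}$. Granting this, the lemma applies with $C=\sigma^2=T e^{(K+\frac{\kappa}{\varepsilon})T}$ and gives $\mathbb P_x(F-\mathbb E_x F\ge r)\le \exp(-r^2/(2\sigma^2))$; since $x\mapsto e^{-r^2/(2x)}$ is increasing and $\sigma^2\le Te^{(K+\frac{\kappa}{\varepsilon})T}$, this yields exactly the claimed inequality. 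So the entire content is the gradient estimate.

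First I would record the two ingredients behind that estimate. Writing $c=K+\frac{\kappa}{\varepsilon}$, the operator bound $\|(\tau_s^\varepsilon)^{-1}\tau_u^\varepsilon\|_\varepsilon\le e^{\frac12 c(u-s)}$ for $s\le u$ is exactly the inequality already used in the proof of Lemma \ref{damped gradient bounded by summation}; it follows from $\tau_t^\varepsilon=\mathcal M_t^\varepsilon\Theta_t^\varepsilon$, the fact that $\Theta^\varepsilon$ is a $g_\varepsilon$-isometry, and Gronwall applied to \eqref{multiplicative function M_t} together with \eqref{bounds2}. Secondly, $d_\varepsilon(\cdot,x)$ is $1$-Lipschitz for $g_\varepsilon$, so it can be approximated by smooth functions whose $g_\varepsilon$-differential has norm at most $1$.

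Next I would realize $F$ as a limit of genuine smooth cylinder functions. For a partition $\{kT/N\}$ I would take $\Phi_\delta(\phi_\eta(X_{t_1}),\dots,\phi_\eta(X_{t_N}))$, where $\phi_\eta\in C^\infty$ approximates $d_\varepsilon(\cdot,x)$ with $\|d\phi_\eta\|_\varepsilon\le 1$, and $\Phi_\delta\in C^\infty$ approximates the maximum with $\partial_i\Phi_\delta\ge 0$ and $\sum_i\partial_i\Phi_\delta\le 1$. For such a function Definition \ref{gradient definition} and the triangle inequality give
\[
\|\tilde D_s^\varepsilon \Phi_\delta(\cdots)\|_\varepsilon\le \sum_{i}\mathbf 1_{[0,t_i]}(s)\,\partial_i\Phi_\delta\,\|(\tau_s^\varepsilon)^{-1}\tau_{t_i}^\varepsilon\|_\varepsilon\,\|d\phi_\eta\|_\varepsilon\le e^{\frac12 cT}\sum_i\partial_i\Phi_\delta\le e^{\frac12 cT},
\]
using $t_i\le T$ and the two ingredients above. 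Integrating in $s$ yields the uniform bound $\int_0^T\|\tilde D_s^\varepsilon\Phi_\delta(\cdots)\|_\varepsilon^2\,ds\le Te^{cT}$, independently of $\delta,\eta$ and $N$.

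Finally I would pass to the limit: letting $\delta,\eta\to0$ recovers $F_N=\max_{1\le k\le N}d_\varepsilon(X_{kT/N},x)$, and letting $N\to\infty$ recovers $F$ by continuity of $t\mapsto X_t$. Closability of $\tilde D^\varepsilon$ (noted after the integration by parts corollary) places $F\in\mathbf{Dom}(\tilde D^\varepsilon)$ and transfers the uniform estimate, giving $\int_0^T\|\tilde D_s^\varepsilon F\|_\varepsilon^2\,ds\le Te^{cT}$ almost surely, which is what the Herbst lemma needs. I expect this limiting step to be the main obstacle: one must ensure the approximants converge strongly enough that the bound survives \emph{almost surely}, not merely in $L^2$, and this is precisely why the estimate above must be made uniform in all three parameters.
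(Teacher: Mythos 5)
Your overall strategy is the same as the paper's: apply the Herbst-type lemma to a discretized sup of $d_\varepsilon(X_\cdot,x)$, and reduce everything to the pointwise bound $\|\tilde D^\varepsilon_s F\|_\varepsilon \le e^{\frac12(K+\frac{\kappa}{\varepsilon})T}$, which follows from the $1$-Lipschitz character of the distance together with the operator estimate $\|(\tau_s^\varepsilon)^{-1}\tau_u^\varepsilon\|_\varepsilon\le e^{\frac12(K+\frac{\kappa}{\varepsilon})(u-s)}$. Your handling of the maximum via a smooth $\Phi_\delta$ with $\partial_i\Phi_\delta\ge 0$, $\sum_i\partial_i\Phi_\delta\le 1$ is a harmless variant of what the paper does: the paper works directly with $f=\max_{1\le i\le n}d_\varepsilon(\cdot,x)$ and invokes Ledoux's partition $(A_j)$ of the path space (on $A_i$ only the $i$-th differential is nonzero and has norm at most $1$), which plays exactly the role of your condition $\sum_i\partial_i\Phi_\delta\le 1$. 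The constants come out the same.

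Where you genuinely diverge from the paper is the order of limits, and that is precisely where your argument has a gap --- one you flag yourself but do not close. You want to first pass to the limit $\delta,\eta\to 0$, $N\to\infty$ inside the Malliavin derivative, so as to place $F=\sup_{0\le t\le T}d_\varepsilon(X_t,x)$ in $\mathbf{Dom}(\tilde D^\varepsilon)$ with the almost sure bound $\int_0^T\|\tilde D_s^\varepsilon F\|_\varepsilon^2\,ds\le Te^{(K+\frac{\kappa}{\varepsilon})T}$, and only then apply the Herbst lemma once. Closability alone does not deliver this: closability transfers the derivative to the limit only when the derivatives of the approximants are known to converge, and transferring the \emph{pointwise} bound requires an extra functional-analytic step (e.g., extract a weak $L^2(\Omega\times[0,T])$ limit from the uniformly bounded gradients and use that the set $\{G:\|G_s\|_\varepsilon\le e^{\frac12(K+\frac{\kappa}{\varepsilon})T}\ \text{a.e.}\}$ is convex and strongly closed, hence weakly closed). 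The paper avoids this entirely by reversing the order: the Herbst lemma is applied to each \emph{cylinder} functional $\max_{1\le i\le n}d_\varepsilon(X_{t_i},x)$, for which the almost sure gradient bound holds with a constant independent of the partition, yielding the concentration inequality at each finite level; one then lets the mesh of the partition go to zero and passes to the limit in the probability inequality itself by monotone convergence. If you adopt that ordering, your estimate on the approximants is already sufficient and the domain/closability issues never arise; as written, your final step needs the weak-compactness-plus-convexity argument spelled out to be complete.
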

\begin{proof}
Let 
\[
F=f(X_{t_1},\cdots,X_{t_n})=\max_{1\leq i\leq n}d_{\varepsilon}(X_{t_i},x)
\]
where $0 \le t_1 \le \cdots \le t_n$ is a partition of $[0,T]$.

By using the arguments of \cite{Led}, p.196, we obtain for a certain partition $(A_j)_{1\leq j\leq n}$ of the path space,
\begin{align*}
\|\tilde{D}^\varepsilon_sF\|_{\varepsilon} & \leq \sum_{l=1}^{n}\mathbf{1}_{(t_{l-1},t_l]}(s)\sum_{i=l}^ne^{\frac{1}{2}(K+\frac{\kappa}{\varepsilon})(t_i-s)}\|d_if(X_{t_1},\cdots,X_{t_n})\|_{\varepsilon} \\
 & \leq e^{\frac{1}{2}(K+\frac{\kappa}{\varepsilon})T}\sum_{l=1}^n\mathbf{1}_{t_{i-1},t_i]}(s)\sum_{i=l}^n\mathbf{1}_{A_i}\leq e^{\frac{1}{2}(K+\frac{\kappa}{\varepsilon})T}
\end{align*}
 We then use the previous lemma and finish the proof by monotone convergence when the mesh of the partition goes to zero.
\end{proof}

The previous proposition easily implies that
\[
\lim \sup_{r \to \infty} \frac{1}{r^2} \ln \mathbb{P}_x \left(\sup_{0\leq t\leq T}d_{\varepsilon}(X_t,x)\geq r  \right) \le -\frac{1}{2Te^{\left(K+\frac{\kappa}{\varepsilon}\right)T}}
\]
Under  further assumptions we can also provide a lower bound. 

\begin{proposition}

Assume that \eqref{bounds2} is satisfied with $K=0$ and moreover that for any vector field $Z$,
\[
-\frac{1}{4} \mathbf{Tr}_\mathcal{H} (J^2_{Z})\ge \rho_2 \| Z \|^2_\mathcal{V},
\]
where $\rho_2>0$. Then for every $\varepsilon , T >0$,
\[
\lim \inf_{r \to \infty} \frac{1}{r^2} \ln \mathbb{P}_x \left(\sup_{0\leq t\leq T}d_{\varepsilon}(X_t,x)\geq r  \right) \ge -\frac{1}{T}\left(\frac{D}{n} + \frac{4 \varepsilon^2}{T} \frac{3D}{2\rho_2n} \ln \left(2 \right) \right)
\]
where $D= \left(
1+\frac{3\kappa}{2\rho_2}\right)n$.
\end{proposition}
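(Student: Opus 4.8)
The plan is to discard the supremum in favour of the single time $T$ and then to exhibit, through a Gaussian lower bound for the transition density of $P_T=e^{\frac12 TL}$, enough heat mass at $g_\varepsilon$-distance at least $r$ from $x$; the dimensional constant $D$ will enter through the generalized curvature-dimension inequality satisfied under the hypotheses, while the $\varepsilon$-dependent term will come from comparing $d_\varepsilon$ with the sub-Riemannian distance.

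First I would use the trivial inclusion
\[
\left\{\sup_{0\le t\le T}d_\varepsilon(X_t,x)\ge r\right\}\supseteq\left\{d_\varepsilon(X_T,x)\ge r\right\},
\]
so that it suffices to bound $\mathbb{P}_x(d_\varepsilon(X_T,x)\ge r)$ from below. Writing $p_T(x,\cdot)$ for the transition density of $P_T$ with respect to the Riemannian volume $\mu$, this probability equals $\int_{\{d_\varepsilon(x,y)\ge r\}}p_T(x,y)\,d\mu(y)$, and the task reduces to producing a set of points at $g_\varepsilon$-distance $\ge r$ on which $p_T$ is not too small.

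The analytic input is a Gaussian lower bound for $p_T$. Under \eqref{bounds2} with $K=0$ together with the trace condition $-\frac14\mathbf{Tr}_\mathcal{H}(J^2_Z)\ge\rho_2\|Z\|^2_\mathcal{V}$, the foliation satisfies the generalized curvature-dimension inequality with effective dimension $D=(1+\frac{3\kappa}{2\rho_2})n$, and I would invoke the associated sub-Riemannian Li-Yau type lower bound, whose exponential rate is governed by $D\,d_0(x,y)^2/(nT)$ up to a volume prefactor, where $d_0=\lim_{\varepsilon\to0}d_\varepsilon$ denotes the sub-Riemannian distance. Evaluated at points lying on a purely horizontal geodesic, where $d_0$ and $d_\varepsilon$ coincide, this already yields the leading rate $-\frac{D}{nT}$ after dividing $\ln p_T$ by $r^2$ and letting $r\to\infty$, the volume prefactor contributing only $o(r^2)$.

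The $\varepsilon$-dependent correction is the delicate point and the main obstacle. It reflects the difference between the genuine Riemannian distance $d_\varepsilon$ and the sub-Riemannian distance $d_0\ge d_\varepsilon$: forcing $d_\varepsilon\ge r$ while keeping $d_0$ small requires a controlled amount of vertical displacement, whose cost is governed by the transverse symmetry constant $\rho_2$ and scales like $1/\varepsilon$ in $g_\varepsilon$. I would quantify this through the distance and volume comparison estimates available under the same curvature-dimension hypothesis, bounding $d_0^2$ on $\{d_\varepsilon\ge r\}$ by $r^2$ together with a vertical correction proportional to $\varepsilon^2 r^2/(\rho_2 T)$, and then optimizing the horizontal/vertical allocation; the $\ln 2$ is expected to emerge from the genuinely exponential (area-type) large-deviation cost of the vertical component, in contrast with the Gaussian horizontal part. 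The crux is establishing this comparison with the sharp constant $\frac{4\varepsilon^2}{T}\frac{3D}{2\rho_2 n}\ln 2$ and confirming that the Gaussian lower bound holds with dimension $D$ in the fixed-$T$, large-$r$ regime; the reduction and the final $\liminf$ bookkeeping are then routine.
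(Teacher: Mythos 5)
Your reduction is exactly the paper's: discard the supremum in favour of the single time $T$, write the probability as $\int_{\{d_\varepsilon(x,y)\ge r\}}p(x,y,T)\,d\mu(y)$, invoke a heat kernel lower bound, and check that the volume prefactor contributes only $o(r^2)$ (this is the argument of \cite{Led}, p.~197). But the entire content of the paper's proof is the citation of the heat kernel lower bound of \cite{BBGM}, which under the stated hypotheses is proved \emph{directly in terms of $d_\varepsilon$} with precisely the constant appearing in the statement:
\[
p(x,y,t) \geq \frac{C}{\mu\left(B\left(x,\sqrt t\right)\right)}
\exp\left(-\frac{d_\varepsilon(y,x)^{2}}{t}\left(\frac{D}{n} + \frac{4\varepsilon^2}{t}\,\frac{3D}{2\rho_2 n}\ln 2 \right)\right).
\]
You instead posit a Li--Yau type bound governed by the sub-Riemannian distance $d_0$ and propose to convert it into a $d_\varepsilon$-statement via a comparison between $d_0$ and $d_\varepsilon$ with the sharp constant $\frac{4\varepsilon^2}{T}\frac{3D}{2\rho_2 n}\ln 2$ --- and you explicitly flag this conversion as ``the crux'' and ``the main obstacle,'' leaving it unproved, with only a heuristic (vertical large-deviation cost, $\ln 2$ from an ``area-type'' mechanism) in its place. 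That is exactly the estimate where all the work and all the unusual constants live, so the proposal as written has a genuine gap at its central step: it reconstructs the easy bookkeeping around the citation but not the cited input.

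There is also an internal inconsistency worth noting. If your intermediate claim were valid --- namely that for every large $r$ there are points (in sufficient volume) lying on horizontal geodesics with $d_0=d_\varepsilon=r$ --- then the $d_0$-based bound alone would give the stronger rate $-\frac{D}{nT}$, the $\varepsilon$-correction would be superfluous, and your subsequent discussion of it would be unnecessary. But that claim is unjustified: $d_\varepsilon\le d_0$ always, and equality along horizontal geodesics requires something like a global Riemannian submersion onto a leaf space in which the projected geodesic is minimizing, which the general foliated setting does not provide; nor is there any guarantee that the set $\{d_\varepsilon\ge r\}\cap\{d_0\le r+C\}$ has volume bounded below at every scale $r$. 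The presence of the $\varepsilon$-dependent term in the theorem of \cite{BBGM} reflects precisely the fact that, in general, one can only control $d_0$ in terms of $d_\varepsilon$ up to such a correction, so this part of your sketch cannot simply be repaired; the honest route is to prove (or cite) the displayed $d_\varepsilon$-lower bound and then run the reduction you already have.
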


\begin{proof}
Let us denote by $p(x,y,t)$ the heat kernel of the Markov process $(X_t)_{t\ge 0}$. It was proved in \cite{BBGM} that, under the above assumptions, it satisfies the following (non-optimal) lower bound
\begin{align*}
p(x,y,t) \geq  \frac{ C}{ \mu\left(B\left(x,\sqrt t \right)\right)}
\exp\left(-\frac{d_\varepsilon \left(y,x\right)^{2}}{t}\left(\frac{D}{n} + \frac{4 \varepsilon^2}{t} \frac{3D}{2\rho_2n} \ln \left(2 \right) \right)\right).
\end{align*}
The result then easily follows from a similar  argument as in \cite{Led} page 197.
\end{proof}

Since the horizontal distribution is bracket generating, then the family of Riemannian metrics converges when $\varepsilon \to 0$ to the sub-Riemannian distance that we denote by $d$. This distance is the distance intrinsically associated with the Dirichlet form associated to $X$. For this distance, we can get the following tail estimates.

\begin{proposition}
Assume that \eqref{bounds2} is satisfied with $K=0$ and moreover that for any vector field $Z$,
\[
-\frac{1}{4} \mathbf{Tr}_\mathcal{H} (J^2_{Z})\ge \rho_2 \| Z \|^2_\mathcal{V},
\]
where $\rho_2>0$. Then for every $T >0$,
\[
\lim \sup_{r \to +\infty} \frac{1}{r^2} \ln \mathbb{P}_x \left( \sup_{0 \le t \le T} d(X_t,x) \ge r \right) \le -\frac{1}{2T}
\]
and 
\[
\lim \inf_{r \to +\infty} \frac{1}{r^2} \ln \mathbb{P}_x \left( \sup_{0 \le t \le T} d(X_t,x) \ge r \right) \ge -\frac{D}{2nT},
\]
where  $D= \left(
1+\frac{3\kappa}{2\rho_2}\right)n$.
\end{proposition}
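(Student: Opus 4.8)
The plan is to prove the two inequalities by genuinely different arguments, because the only structural link to the preceding propositions is the monotone relation $d_\varepsilon\le d$, and this relation cooperates with only one of the two directions. Recall that as $\varepsilon\downarrow 0$ the lengths $\int\sqrt{g_\varepsilon(\dot\gamma,\dot\gamma)}$ increase on every curve with a vertical component, so $d_\varepsilon\le d$ with $d_\varepsilon\uparrow d$ pointwise. Note also that the generator $\tfrac12 L$, and hence the heat kernel $p(x,y,t)$, do not depend on $\varepsilon$, whereas the estimates for $d_\varepsilon$ do; this is what will let me send $\varepsilon\to 0$ inside the kernel bounds.

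For the \textbf{lower bound} I would start from the inclusion $\{\sup_{t\le T}d_\varepsilon(X_t,x)\ge r\}\subseteq\{\sup_{t\le T}d(X_t,x)\ge r\}$, which already gives $\mathbb{P}_x(\sup_t d\ge r)\ge\mathbb{P}_x(\sup_t d_\varepsilon\ge r)$ for every $\varepsilon$. Rather than crudely pass the previous proposition to the limit, I would rerun the Ledoux p.\ 197 argument directly from the \cite{BBGM} heat-kernel lower bound, letting $\varepsilon\to0$ at the level of the kernel: the correction $\frac{4\varepsilon^2}{t}\frac{3D}{2\rho_2 n}\ln 2$ vanishes and $d_\varepsilon\uparrow d$, producing a genuinely sub-Riemannian Gaussian lower bound $p(x,y,t)\ge \frac{C}{\mu(B(x,\sqrt t))}\exp\!\big(-a\,\frac{d(x,y)^2}{t}\big)$. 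Feeding this into $\mathbb{P}_x(d(X_T,x)\ge r)=\int_{d(y,x)\ge r}p(x,y,T)\,d\mu(y)$ and using the sub-Gaussian volume growth, the exponential rate is governed by the kernel exponent at $d=r$, namely $-a/T$. Identifying $a$ with the sharp sub-Riemannian Gaussian constant $\frac{D}{2n}$ — the value forced by the flat check $\kappa=0$, $D=n$, which must return the intrinsic rate $-\frac{1}{2T}$ — yields $\liminf_{r\to\infty}\frac{1}{r^2}\ln\mathbb{P}_x(\sup_t d\ge r)\ge -\frac{D}{2nT}$.

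The \textbf{upper bound} is the main obstacle. Here $d\ge d_\varepsilon$ runs the wrong way, so none of the path-space concentration inequalities of the previous propositions can be transferred to $d$: they all carry the factor $e^{(K+\kappa/\varepsilon)T}$ coming from the curvature distortion of $\tau^\varepsilon_t$, and this blows up precisely in the limit $\varepsilon\to0$ that defines $d$. I would instead use that, as noted in the text, $d$ is exactly the intrinsic distance of the strongly local Dirichlet form $\mathcal{E}(u)=\tfrac12\int|\nabla_{\mathcal H}u|^2\,d\mu$ generating $(P_t)$, and invoke the curvature-free sub-Riemannian Gaussian \emph{upper} bound (from the intrinsic-distance theory presented in \cite{Led}, or from \cite{BBGM}): for every $\delta>0$, $p(x,y,t)\le \frac{C_\delta}{\mu(B(x,\sqrt t))}\exp\!\big(-\frac{d(x,y)^2}{(2+\delta)t}\big)$. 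Integrating over $\{d(y,x)\ge r\}$ and letting $\delta\to0$ gives $\limsup_{r\to\infty}\frac{1}{r^2}\ln\mathbb{P}_x(d(X_T,x)\ge r)\le -\frac{1}{2T}$.

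It remains to upgrade the fixed-time estimate to the running supremum. Applying the Markov property at the nodes of a fine partition of $[0,T]$, together with the same kernel upper bound to control $\mathbb{P}_x(\sup_{s\le h}d(X_s,x)\ge\rho)$ on short intervals — a standard maximal-inequality argument, as on \cite{Led} p.\ 197 — one passes from $\mathbb{P}_x(d(X_T,x)\ge r)$ to $\mathbb{P}_x(\sup_{t\le T}d(X_t,x)\ge r)$ without changing the exponential rate, completing the bound $-\frac{1}{2T}$. The two delicate points of the argument are therefore (a) the wrong-way monotonicity in the upper bound, which forces the use of the intrinsic Gaussian upper bound in place of the path-space concentration developed in the paper, and (b) tracking the sharp Gaussian coefficient $\frac{D}{2n}$ through the sub-Riemannian limit $\varepsilon\to0$ of the \cite{BBGM} lower bound.
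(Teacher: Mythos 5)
Your lower-bound argument has a genuine gap, and it sits exactly at the point you flag as delicate. Passing $\varepsilon\to 0$ in the \cite{BBGM} estimate does what you say: the correction $\frac{4\varepsilon^2}{t}\frac{3D}{2\rho_2 n}\ln 2$ vanishes and $d_\varepsilon\uparrow d$, but the surviving exponent is $-\frac{D}{n}\frac{d(x,y)^2}{t}$, i.e. the Gaussian coefficient you actually obtain is $\frac{D}{n}$, not $\frac{D}{2n}$ (moreover you would need the prefactor $C$ in \cite{BBGM} not to degenerate as $\varepsilon\to 0$, which the quoted statement does not assert). Run through Ledoux's argument, this yields $\liminf_{r\to\infty}\frac{1}{r^2}\ln\mathbb{P}_x\left(\sup_{0\le t\le T}d(X_t,x)\ge r\right)\ge -\frac{D}{nT}$, a factor $2$ short of the claim. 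The "flat check" cannot repair this: with $\kappa=0$, $D=n$, the coefficient $\frac{D}{n}$ gives the bound $-\frac{1}{T}$, which is weaker than, but perfectly consistent with, the true rate $-\frac{1}{2T}$; since a lower bound cannot be contradicted by being too weak, consistency in a special case cannot force the improvement to $\frac{D}{2n}$ in general. The missing ingredient is the sharper two-sided estimate of \cite{BBG}, which is what the paper invokes: for every $0<\delta<1$ there is $C(\delta)>0$ with
\[
p(x,y,t)\ \ge\ \frac{C(\delta)^{-1}}{\mu\left(B\left(x,\sqrt{t}\right)\right)}\exp\left(-\frac{D\,d(x,y)^2}{n(2-\delta)t}\right),
\]
stated directly for the sub-Riemannian distance $d$; it is the factor $2-\delta$ in the denominator, absent from the \cite{BBGM} bound, that produces $\frac{D}{2n}$ after letting $\delta\to 0^+$.

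Your upper bound, by contrast, is correct in outline but takes a different route from the paper and needs care at two points. The paper observes that the assumptions imply volume doubling and the scale-invariant Poincar\'e inequality for $(\mathbb{M},d,\mu)$ (by \cite{BBG}) and then quotes Theorem 3.9 of \cite{BSC}, which bounds the running supremum directly with the sharp constant $-\frac{1}{2T}$. If you argue by hand as you propose: (a) the fixed-time Gaussian upper bound $p(x,y,t)\le\frac{C(\delta)}{\mu(B(x,\sqrt{t}))}\exp\left(-\frac{d(x,y)^2}{(2+\delta)t}\right)$ is the \cite{BBG} upper estimate --- it is not "curvature-free" (it rests on doubling and Poincar\'e, which here come from the curvature assumptions) and it is not found in \cite{Led} or \cite{BBGM}; (b) the upgrade from $d(X_T,x)$ to $\sup_{0\le t\le T}d(X_t,x)$ must be done by reflecting at level $\theta r$ with $\theta\uparrow 1$: writing $\tau_r$ for the hitting time of $\{d(\cdot,x)\ge r\}$ and using the strong Markov property together with the uniformity in the starting point of the \cite{BBG} bound, one gets $\mathbb{P}_x(\tau_r\le T)\le(1+o(1))\,\mathbb{P}_x(d(X_T,x)\ge\theta r)$, whence the rate $-\frac{\theta^2}{(2+\delta)T}$, and then $\theta\to 1$, $\delta\to 0$. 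The naive reflection at level $r/2$, which is what a "standard maximal-inequality argument" usually denotes, would only give $-\frac{1}{8T}$ and lose the sharp constant.
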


\begin{proof}
For the upper bound, we first observe that our assumptions imply that the metric measure space $(\M,d,\mu)$ satisfies the volume doubling property and that the uniform scale invariant Poincar\'e inequality on balls is satisfied (see \cite{BBG}). The upper bound is therefore a consequence of Theorem 3.9 in \cite{BSC}.

For the lower bound, it was also proved in \cite{BBG} that for any $0<\ve <1$
there exists a constant $C(\ve) = C(d,\kappa,\rho_2,\ve)>0$, which tends
to $\infty$ as $\ve \to 0^+$, such that for every $x,y\in \bM$
and $t>0$ one has
\[
\frac{C(\ve)^{-1}}{\mu(B(x,\sqrt
t))} \exp
\left(-\frac{D d(x,y)^2}{n(2-\ve)t}\right)\le p(x,y,t)\le \frac{C(\ve)}{\mu(B(x,\sqrt
t))} \exp
\left(-\frac{d(x,y)^2}{(2+\ve)t}\right).
\]
This lower bound on the heat kernel implies the expected result.
\end{proof}

\end{document}